\numberwithin{equation}{section}
\newtheorem{Theorem}{Theorem}[section]
\newtheorem{Corollary}[Theorem]{Corollary}
\newtheorem{Lemma}[Theorem]{Lemma}
\newtheorem{Proposition}[Theorem]{Proposition}
 { \theoremstyle{definition}
\newtheorem{Definition}[Theorem]{Definition}

\newtheorem{Example}[Theorem]{Example}
\newtheorem{Remark}[Theorem]{Remark} }
\DeclareMathOperator{\rank}{rank}
\begin{document}

\allowdisplaybreaks

\newcommand{\arXivNumber}{1612.09439}

\renewcommand{\thefootnote}{}

\renewcommand{\PaperNumber}{045}

\FirstPageHeading

\ShortArticleName{Hodge Numbers from Picard--Fuchs Equations}

\ArticleName{Hodge Numbers from Picard--Fuchs Equations\footnote{This paper is a~contribution to the Special Issue on Modular Forms and String Theory in honor of Noriko Yui. The full collection is available at \href{http://www.emis.de/journals/SIGMA/modular-forms.html}{http://www.emis.de/journals/SIGMA/modular-forms.html}}}

\Author{Charles F.~DORAN~$^{\dag^1}$, Andrew HARDER~$^{\dag^2}$ and Alan THOMPSON~$^{\dag^3\dag^4}$}

\AuthorNameForHeading{C.F.~Doran, A.~Harder and A.~Thompson}

\Address{$^{\dag^1}$~Department of Mathematical and Statistical Sciences, 632 CAB, University of Alberta, \\
\hphantom{$^{\dag^1}$}~Edmonton, AB, T6G 2G1, Canada}
\EmailDD{\href{mailto:charles.doran@ualberta.ca}{charles.doran@ualberta.ca}}

\Address{$^{\dag^2}$~Department of Mathematics, University of Miami, 1365 Memorial Drive, Ungar 515,\\
\hphantom{$^{\dag^2}$}~Coral Gables, FL, 33146, USA}
\EmailDD{\href{mailto:a.harder@math.miami.edu}{a.harder@math.miami.edu}}

\Address{$^{\dag^3}$~Mathematics Institute, Zeeman Building, University of Warwick, Coventry, CV4 7AL, UK}
\EmailDD{\href{mailto:a.thompson.8@warwick.ac.uk}{a.thompson.8@warwick.ac.uk}}

\Address{$^{\dag^4}$~DPMMS, Centre for Mathematical Sciences, University of Cambridge, Wilberforce Road,\\
\hphantom{$^{\dag^4}$}~Cambridge, CB3 0WB, UK}

\ArticleDates{Received January 20, 2017, in f\/inal form June 12, 2017; Published online June 18, 2017}

\Abstract{Given a variation of Hodge structure over $\mathbb{P}^1$ with Hodge numbers $(1,1,\dots,1)$, we show how to compute the degrees of the Deligne extension of its Hodge bundles, following Eskin--Kontsevich--M\"oller--Zorich, by using the local exponents of the corresponding Picard--Fuchs equation. This allows us to compute the Hodge numbers of Zucker's Hodge structure on the corresponding parabolic cohomology groups. We also apply this to families of elliptic curves, K3 surfaces and Calabi--Yau threefolds.}

\Keywords{variation of Hodge structures; Calabi--Yau manifolds}

\Classification{14D07; 14D05; 14J32}

\renewcommand{\thefootnote}{\arabic{footnote}}
\setcounter{footnote}{0}

\section{Introduction}
The goal of this paper is to compute the Hodge numbers of the parabolic cohomology groups of a variation of Hodge structure (VHS) in the case where we know the Picard--Fuchs equation.

In more detail, let us assume that $C$ is a smooth quasiprojective curve which bears an $\mathbb{R}$-VHS of weight $k$. If $\mathbb{V}$ is the underlying local system and $j\colon C \hookrightarrow \overline{C}$ is the embedding of $C$ into its smooth completion, then Zucker \cite{htdcl2cpm} showed that $H^1(\overline{C}, j_* \mathbb{V})$ (which we will call the \emph{parabolic cohomology} of $\mathbb{V}$) bears a pure Hodge structure of weight $k+1$. Moreover, Zucker also showed that this Hodge structure is closely related to the f\/iltrands of the Leray spectral sequence when our VHS comes from a family of manifolds, so one can use the Hodge structure on the parabolic cohomology groups to understand the Hodge structure of f\/ibrations and vice versa.

More recently, following work of Morrison and Walcher \cite{dbnf, ehalaots}, interest has arisen in determining when the Hodge structure on the parabolic cohomology of specif\/ic families of Calabi--Yau threefolds admits rational $(2,2)$ classes, since such classes correspond to possible normal functions. Morrison and Walcher \cite{dbnf} gave an exciting interpretation of such normal functions of families of Calabi--Yau threefolds in terms of $D$-branes.

Following this, del Angel, M\"uller-Stach, van Straten, and Zuo \cite{hca1pfcy3f} sought further examples of such normal functions, by studying Hodge structures on the parabolic cohomology of families of Calabi--Yau threefolds whose underlying VHS's are pull-backs of VHS's on the thrice-punctured sphere with $b_3 = 4$; such VHS's with $b_3 =4$ were classif\/ied by Doran and Morgan \cite{doranmorgan}, who found $14$ cases. They reduced these computations to the computation of the degree of the quasi-canonical extension of the Hodge bundles of these VHS's. However, it appears as if the authors of \cite{hca1pfcy3f} were not able to compute these degrees in great generality, thus many entries in their tables are left blank. Further progress was made by Holborn and M\"uller-Stach \cite{hnccytls}, however it appears that they were still not able to complete the table from \cite{hca1pfcy3f}. Some related computations of degrees of quasi-canonical extensions of Hodge bundles have also been carried out by Green, Grif\/f\/iths, and Kerr \cite{egpvhs}.

Recently, a preprint of Eskin, Kontsevich, M\"oller, and Zorich \cite[Section 6]{lblefbc} suggested a way to compute the degrees of Hodge bundles of $\mathbb{R}$-VHS's of $(1,1,\dots,1)$-type, when the Picard--Fuchs operators controlling these $\mathbb{R}$-VHS's are known. The goal of this paper is to present the technique of \cite{lblefbc} in greater generality\footnote{As stated, the results of \cite{lblefbc} apply only to VHS's of $(1,1,1,1)$-type, though it is clear, even in their exposition, that their approach is more general.} and apply it to an array of situations. In particular, we of\/fer a completion of the tables of \cite{hca1pfcy3f}.

The application most germane to the previous work of the authors is to the study of threefolds f\/ibred by lattice polarized K3 surfaces. In \cite{flpk3sm,cytfksapec,cytfmqk3s,cytfhrlpk3s}, we constructed Calabi--Yau threefolds by f\/irst choosing convenient families of lattice polarized K3 surfaces $\mathcal{X} \to \mathbb{P}^1$, then performing base change along suitable maps $g\colon \mathbb{P}^1 \rightarrow \mathbb{P}^1$. One of the main results of this work \cite{cytfhrlpk3s} is a classif\/ication of all Calabi--Yau threefolds f\/ibred by $M_n$-polarized K3 surfaces, where $M_n$ denotes the rank $19$ lattice $M_n := E_8 \oplus E_8 \oplus H \oplus \langle -2n \rangle$. In order to restrict the number of cases that we needed to check geometrically, the results and techniques of this paper helped immensely.

In more generality, our results allow us to compute the Hodge numbers of the parabolic cohomology $H^1(\mathbb{P}^1, j_*\mathbb{V})$, where $\mathcal{X} \to \mathbb{P}^1$ is any smooth projective threefold f\/ibred by K3 surfaces over $\mathbb{P}^1$ with generically Picard rank $19$ f\/ibres, and $\mathbb{V}$ is the corresponding local system of cohomology over the complement of the critical values of the f\/ibration. By the Leray spectral sequence \cite[Section 15]{htdcl2cpm}, this is a direct summand of the Hodge structure on $H^3(\mathcal{X},\mathbb{Q})$, and its complement has no $(3,0)$ or $(0,3)$ part, so in fact this computes $h^{3,0}(\mathcal{X})$ and places a lower bound on $h^{2,1}(\mathcal{X})$.

Finally, we note that in the case of an elliptic f\/ibration, the degree of the Hodge bundle can be computed geometrically by putting the f\/ibration into Weierstrass normal form
\begin{gather*}Y^2 = X^3 + g_2X + g_3\end{gather*}
and computing the degrees of $g_2$ and $g_3$. In the case of K3 surface f\/ibrations (and to an even greater extent for Calabi--Yau threefold f\/ibrations) we do not have such a normal form available, because there are many families of K3 surfaces polarized by rank $19$ lattices over $\mathbb{P}^1$. One can construct normal forms corresponding to each polarizing lattice, as we have done in \cite{cytfmqk3s, cytfhrlpk3s} for some specif\/ically chosen lattices, but this approach quickly becomes intractable: given an arbitrary rank $19$ lattice $L$, it is very dif\/f\/icult to f\/ind explicit representatives for all $L$-polarized K3 surfaces (see \cite{pfums} for some discussion of this problem). The work in this paper allows us to bypass such considerations in the presence of a known Picard--Fuchs equation. In a sense this is antipodal to the work of Fujino \cite{cbf2}, who computes the Hodge bundles of K3 surface f\/ibrations with only unipotent monodromy.

\subsection{Structure of this paper}

Section \ref{sec:background} begins with a brief discussion of the necessary background in Hodge theory and the theory of ordinary dif\/ferential equations, following \cite{lblefbc}, which we require in order to state the main results in Section \ref{sec:impres}. Section \ref{sec:background} concludes with a discussion of how these results are af\/fected by base change.

The remainder of the paper presents some applications of this theory. Firstly, in Section \ref{sect:Ell}, we apply the results of Section \ref{sec:background} to the case of elliptic f\/ibrations. Here we show how to compute the Hodge numbers of an elliptic surface over $\mathbb{P}^1$ directly from its Picard--Fuchs equation.

In Section \ref{sect:K3}, we move on to analyze $(1,1,1)$-type $\mathbb{R}$-VHS's over $\mathbb{P}^1$, which arise in the context of K3 f\/ibrations. We again compute their Hodge numbers using the corresponding Picard--Fuchs equations, proving a Hodge bundle formula for such f\/ibrations. In particular, we can use this to place constraints on the possible K3 f\/ibrations that can arise on a Calabi--Yau threefold, which allows us to recover an approximate form of a result from \cite{flpk3sm}.

Finally, in Section \ref{sect:CY3}, we will consider the case of Calabi--Yau threefold f\/ibrations over $\mathbb{P}^1$. Here we achieve our main goal and complete the computations of \cite{hca1pfcy3f} and \cite{hnccytls}. Finally, we conclude with a result that allows us to constrain the possible Calabi--Yau fourfolds which may admit f\/ibrations by the quintic mirror Calabi--Yau threefold. The results we use to do this apply in greater generality; we expect similar results to hold for any Calabi--Yau threefolds $X$ with $(1,1,1,1)$-type Hodge structure on $H^3(X,\mathbb{Q})$.

\section{Background and important results}\label{sec:background}

In this section we will develop the necessary background material and state a number of results that are necessary for the subsequent computations in Sections \ref{sect:Ell}, \ref{sect:K3} and \ref{sect:CY3}.

\subsection{Variations of Hodge structure} \label{sec:VHS}

Let us begin with a real variation of polarizable Hodge structure over a quasi-projective curve~${C}$. We denote this $\mathbb{R}$-VHS by $(\mathbb{V}, \mathscr{F}^\bullet, \nabla)$, where $\mathbb{V}$ is the underlying real local system, $\mathscr{F}^\bullet$ is the Hodge f\/iltration on $\mathscr{V} :=\mathbb{V} \otimes \mathscr{O}_C$ and $\nabla$ is the Gauss--Manin connection. We require that $\nabla$ satisfy Grif\/f\/iths transversality \cite{piam1,piam2,piam3}; in other words, $\nabla(\mathscr{F}^i)$ should be contained in $\mathscr{F}^{i+1} \otimes \Omega_C$.

Following Deligne \cite{edpsr}, we can canonically extend the bundle $\mathscr{V}$ to a bundle on the smooth completion $\overline{C}$ of $C$, as follows. Near a point $p \in \Delta = \overline{C} \setminus C$, we may choose a chart so that $p$ is the center of a disc $D$. Let $V_0$ be a f\/ibre of $\mathbb{V}$ near $p$ and suppose that monodromy around $p$ acts on $V_0$ by a quasi-unipotent transformation $T$. Def\/ine subspaces $W_{\alpha}$ of $V_0$ by
\begin{gather*}W_\alpha := \big\{ v \in V_0 \colon (T - \zeta_\alpha)^k v = 0\big\},\end{gather*}
where $\zeta_\alpha = e^{2\pi i \alpha}$ and $\alpha$ is chosen to be in the interval $[0,1)$. These vector spaces are zero for all but f\/initely many values of $\alpha \in [0,1)$, and we have a direct sum decomposition $V_0 = \bigoplus_{\alpha} W_{\alpha}$. The vector spaces $W_{\alpha}$ over the dif\/ferent points of the punctured disc $D \setminus \{p\}$ def\/ine a sub-bundle~$\mathbb{W}_{\alpha}$ of~$\mathbb{V}$.

Let $T_\alpha$ denote the action of monodromy on $\mathbb{W}_\alpha$; note that $\zeta_{\alpha}^{-1}T_{\alpha}$ is unipotent by construction. Def\/ine
\begin{gather*}N_{\alpha} := \frac{-1}{2\pi i}\log(T_{\alpha}),\end{gather*}
where the branch of the logarithm is chosen so that the unique eigenvalue of $N_\alpha$ is in the interval $[0,1)$. Then for any section $v$ of $\mathbf{e}^*\mathbb{W}_\alpha$, where $\mathbf{e}(z) := \exp(2 \pi i z)$ denotes the complex exponential, we may def\/ine
\begin{gather*}\tilde{v}(z) = \exp(2\pi i z (N_{\alpha} - \alpha))v(z),\end{gather*}
where $z \in \mathbb{H}$ is an element of the upper half-plane. Since $v(z+1) = T(v(z))$, we see that~$\tilde{v}(z)$ is invariant under the translation $z \mapsto z +1$, so $\tilde{v}$ def\/ines a holomorphic section of~$\mathscr{V}$ on the punctured disc~$D^*$. Deligne's canonical extension of~$\mathscr{V}$ is the bundle on $D$ which is the $\mathscr{O}_D$-module spanned by all such vectors over all $\alpha \in [0,1)$. This is denoted $\overline{\mathscr{V}}$.

The Hodge f\/iltration $\mathscr{F}^\bullet$ extends to a Hodge f\/iltration on $\overline{\mathscr{V}}$, which we denote by $\overline{\mathscr{F}}^\bullet$. The graded pieces of $\overline{\mathscr{F}}^\bullet$ will be denoted $\mathscr{E}^{p,\ell-p} = \overline{\mathscr{F}}^p / \overline{\mathscr{F}}^{p-1}$, where $\ell$ is the length of the Hodge f\/iltration.

Following \cite[Section 2]{lblefbc}, there is a parabolic f\/iltration of the f\/ibre $V_p$ of $\overline{\mathscr{V}}$ over each point $p \in \Delta$, given by $V^{\geq \beta} := \bigoplus_{\alpha \geq \beta} V_{\alpha}$, where $V_\alpha$ is the subspace of~$V_p$ spanned by local sections of~$\overline{\mathscr{V}}$ coming from sections of~$\mathbf{e}^* \mathbb{W}_\alpha$. The \emph{parabolic degree} of $\overline{\mathscr{V}}$ is given by
\begin{gather} \label{eq:pardeg} \deg_{\mathrm{par}} \overline{\mathscr{V}} := \deg \overline{\mathscr{V}} + \sum_{\substack{p \in \Delta \\ \alpha \in [0,1)}} \alpha \dim V_\alpha.\end{gather}

This parabolic f\/iltration on the f\/ibres of $\overline{\mathscr{V}}$ over points in $\Delta$ extends to a parabolic f\/iltration on the f\/ibres of $\mathscr{E}^{p,\ell-p}$; we denote its graded pieces by $\mathscr{E}^{p,\ell-p}_{\alpha}$. The connection $\nabla$ extends to a~connection on $\overline{\mathscr{V}}$ with logarithmic poles along $\Delta$ or, in other words,
\begin{gather*}\overline{\nabla} \colon \ \overline{\mathscr{V}} \longrightarrow \overline{\mathscr{V}} \otimes \Omega_C(\Delta).\end{gather*}
This connection is horizontal with respect to the Hodge f\/iltration $\overline{\mathscr{F}}^\bullet$, so we obtain morphisms of bundles
\begin{gather*}\theta_{i-1} \colon \ \mathscr{E}^{\ell-i,i} \longrightarrow \mathscr{E}^{\ell-i-1,i+1} \otimes \Omega_C(\Delta).\end{gather*}
This map also respects the local f\/iltration at each point $p \in \Delta$, i.e.,
\begin{gather*}\overline{\nabla} \colon \ V_p^{\geq \beta} \longrightarrow V_p^{\geq \beta} \otimes \Omega_C(\Delta).\end{gather*}

The following useful result appears as part of the proof of \cite[Theorem 6.1]{lblefbc}.

\begin{Lemma}\label{lemma:conj}
Assume that $\overline{C} = \mathbb{P}^1$. Then
\begin{gather*}\deg_{\mathrm{par}} \mathscr{E}^{p,q} = -\deg_{\mathrm{par}} \mathscr{E}^{q,p}.\end{gather*}
\end{Lemma}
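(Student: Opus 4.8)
The plan is to use the polarization of the variation to identify $\mathscr{E}^{q,p}$ with the parabolic dual of $\mathscr{E}^{p,q}$, and to deduce the stated sign change from the behaviour of parabolic degree under duality. Since the variation is polarizable, fix a polarization $Q$: a flat, nondegenerate, $(-1)^k$-symmetric bilinear form on $\mathbb{V}$. Because $Q$ is horizontal it is a flat pairing $\mathscr{V}\otimes\mathscr{V}\to\mathscr{O}_C$, and the first Hodge--Riemann bilinear relation gives $Q(\mathscr{F}^a,\mathscr{F}^b)=0$ whenever $a+b>\ell$. It follows that $Q$ descends to a perfect pairing of graded pieces $\mathscr{E}^{p,q}\otimes\mathscr{E}^{q,p}\to\mathscr{O}_C$, all other pairings of graded pieces being zero; hence over $C$ the form $Q$ identifies $\mathscr{E}^{q,p}$ with $(\mathscr{E}^{p,q})^{\vee}$. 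As the variation has Hodge numbers $(1,\dots,1)$, each $\mathscr{E}^{p,q}$ is a line bundle on $\mathbb{P}^1$, so the task is to compare their parabolic structures along $\Delta$ and to measure how far this identification is from extending over $\Delta$.

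Next I would analyse each point $s\in\Delta$ locally. Monodromy invariance $Q(Tv,Tw)=Q(v,w)$ forces $Q(W_\alpha,W_{\alpha'})=0$ unless $\zeta_\alpha\zeta_{\alpha'}=1$, i.e.\ unless $\alpha'\equiv-\alpha\pmod 1$. The line bundles $\mathscr{E}^{p,q}$ and $\mathscr{E}^{q,p}$ carry single parabolic weights $\alpha^{p,q}_s,\alpha^{q,p}_s\in[0,1)$, determined by the summands $W_\alpha$ in which their Hodge lines lie; non-degeneracy of the induced pairing then forces $\alpha^{q,p}_s=1-\alpha^{p,q}_s$ when $\alpha^{p,q}_s\neq 0$, and $\alpha^{q,p}_s=0$ when $\alpha^{p,q}_s=0$. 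These are precisely the parabolic weights of the dual of $\mathscr{E}^{p,q}$, so $Q$ realises $\mathscr{E}^{q,p}$ as the parabolic dual of $\mathscr{E}^{p,q}$ at every point of $\Delta$.

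Finally I would convert this into the degree identity by computing the order of vanishing of the extended pairing at each $s$. Expressing local frames of $\mathscr{E}^{p,q}$ and $\mathscr{E}^{q,p}$ through Deligne's generating sections $\exp(2\pi i z(N_\alpha-\alpha))v$ in the coordinate $t=\mathbf{e}(z)$, the nilpotent factors $N_\alpha-\alpha$ contribute only bounded $\log$-corrections, so the $Q$-pairing of the two frames scales like $t^{\alpha^{p,q}_s+\alpha^{q,p}_s}$; by the previous paragraph this exponent is $1$ when $\alpha^{p,q}_s\neq 0$ and $0$ otherwise, an integer in either case. Thus the map $\mathscr{E}^{q,p}\to(\mathscr{E}^{p,q})^{\vee}$ extends over $\Delta$ with a zero of order $\alpha^{p,q}_s+\alpha^{q,p}_s$ at each $s$, whence
\[ \deg\mathscr{E}^{p,q}+\deg\mathscr{E}^{q,p}=-\sum_{s\in\Delta}\big(\alpha^{p,q}_s+\alpha^{q,p}_s\big), \]
which rearranges to $\deg_{\mathrm{par}}\mathscr{E}^{p,q}+\deg_{\mathrm{par}}\mathscr{E}^{q,p}=0$. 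I expect the main obstacle to be exactly this last, local step: controlling how the holomorphic extension of the polarization degenerates along $\Delta$ --- equivalently, verifying that the extended pairing scales like $t^{\alpha^{p,q}_s+\alpha^{q,p}_s}$ --- since it requires careful bookkeeping of the Deligne sections together with the skew behaviour of the nilpotent logarithms $N_\alpha-\alpha$ under $Q$.
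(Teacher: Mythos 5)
You should first be aware that the paper does not actually prove Lemma~\ref{lemma:conj}: it is imported verbatim from the proof of Theorem~6.1 of \cite{lblefbc}, where it is obtained from the metric side of the theory (parabolic degrees of Hodge bundles computed as integrals of the curvature of the Hodge metric, for which the statement follows because complex conjugation of the real VHS interchanges $\mathscr{E}^{p,q}$ and $\mathscr{E}^{q,p}$ isometrically and reverses the sign of the curvature form). So your proposal can only be compared with that standard argument. Your route --- polarization, then parabolic duality, then the degree identity --- is a legitimate alternative, and your final bookkeeping is exactly right: if the $Q$-pairing $\mathscr{E}^{p,q}\otimes\mathscr{E}^{q,p}\to\mathscr{O}$ extends with a zero of order exactly $\alpha^{p,q}_s+\alpha^{q,p}_s$ at each $s\in\Delta$, the lemma follows.

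The genuine gap sits exactly where you predicted, and it infects step 2 as well as step 3, since they are the same issue. Nondegeneracy of the graded pairing over $C$ does \emph{not} force $\alpha^{q,p}_s\equiv-\alpha^{p,q}_s\pmod 1$. The Hodge line of $\mathscr{E}^{p,q}$ is not a flat subline of any single $\mathbb{W}_\alpha$ near $s$ --- only its limit in the fibre of the Deligne extension lies in a parabolic graded piece $V_\alpha$ --- so the orthogonality $Q(W_\alpha,W_{\alpha'})=0$ for $\alpha'\not\equiv-\alpha$ imposes no constraint over $C$. A ``wrong'' weight assignment is perfectly compatible with perfectness of the pairing away from $\Delta$: it merely increases the integer order of vanishing of the extended pairing at $s$, which is precisely the quantity you are trying to compute, so the appeal to nondegeneracy is circular at this point. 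Indeed, the pairing induced on the fibres at $s$ of the extended bundles genuinely degenerates whenever the weights are nonzero, so there is no nondegenerate limit pairing to invoke; and one can write down filtered flat bundles satisfying Griffiths transversality and the first bilinear relation (but not the positivity making them VHS's) whose weights fail to pair up. For the same reason, step 3 only yields $\mathrm{ord}_s\geq\alpha^{p,q}_s+\alpha^{q,p}_s$ formally; equality requires that the limit lines in $V_{\alpha}$ and $V_{1-\alpha}$ pair nontrivially, which fails to be automatic as soon as some $V_\alpha$ has dimension greater than one (e.g.\ the $T_{\mathrm{un}}$ case of Example~\ref{ex:K3}). Both missing facts are true, but they are theorems of asymptotic Hodge theory, not formal consequences of the setup: they follow from Schmid's norm estimates (identifying parabolic weights with growth rates of the Hodge metric, whence reality forces conjugate weights on $\mathscr{E}^{p,q}$ and $\mathscr{E}^{q,p}$) or from the polarized limit mixed Hodge structure. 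Supplying this input is exactly what the curvature-integral argument of \cite{lblefbc} does wholesale.

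One further caution: your local computation is sensitive to the normalization of the Deligne extension. With the paper's literal formula $\tilde{v}(z)=\exp(2\pi i z(N_\alpha-\alpha))v(z)$ and $v(z+1)=Tv(z)$, the frame pairing comes out as $t^{-(\alpha^{p,q}_s+\alpha^{q,p}_s)}$ --- a pole, which would flip the sign in your degree identity. You must work in the normalization where sections coming from $\mathbb{W}_\alpha$ scale like $t^{+\alpha}$ (residues of $\overline{\nabla}$ in $[0,1)$); that is the convention under which equation~\eqref{eq:pardeg}, Table~\ref{tab:elliptic} and Example~\ref{ex:Elc} are mutually consistent. This is a slip in the paper's exposition rather than in your argument, but your step 3 needs to say explicitly which normalization it uses.
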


The following two examples will be useful in later portions of this paper.

\begin{Example}[elliptic curves]\label{ex:Elc}
Let us take a degeneration of elliptic curves over the unit disc in $\mathbb{C}$, with central f\/ibre of Kodaira type $\mathrm{IV}^*$. In this case, one can check (see \cite[Table VI.2.1]{btes}) that the local monodromy action on the f\/irst integral cohomology of a general f\/ibre has matrix
\begin{gather*}\left( \begin{matrix} -1 & -1 \\ 1 & 0 \end{matrix}\right).\end{gather*}
This matrix has eigenvalues equal to $e^{2\pi i/3}$ and $e^{4\pi i /3}$. Therefore, the spaces $V_{1/3}$ and $V_{2/3}$ are both $1$-dimensional. We can ask how these spaces interact with the canonical extension of the Hodge f\/iltration. We know that the map $\nabla\colon \overline{\mathscr{F}}^0 \rightarrow \overline{\mathscr{F}}^{1} \otimes \Omega_D(0)$ respects the f\/iltration $\mathbb{C}^2 = V^{\geq 1/3} \supset V^{\geq 2/3}=\mathbb{C}$ and thus the image of $\nabla$ in $\overline{\mathscr{F}}^1 \otimes \Omega_D(0)$ must be $V^{\geq 2/3} = V_{2/3}$, since~$\nabla$ is not zero. Therefore $\mathscr{E}^{0,1}_{2/3} = \mathscr{E}^{0,1}$ and $\mathscr{E}^{0,1}_{1/3} = 0$. Similar arguments show that $\mathscr{E}^{1,0}_{1/3} = \mathscr{E}^{1,0}$ and $\mathscr{E}^{1,0}_{2/3} = 0$.

Using Kodaira's classif\/ication of germs of one parameter degenerations of elliptic curves, one can compute parabolic f\/iltrations on the Hodge bundles corresponding to all non-unipotent degenerations of elliptic curves in the same way. The result is given in Table~\ref{tab:elliptic}, which lists the values of $\alpha$ such that $\mathscr{E}^{1,0}_{\alpha} = \mathscr{E}^{1,0}$ and $\mathscr{E}^{0,1}_{\alpha} = \mathscr{E}^{0,1}$ for each type of singular f\/ibre.

\begin{table}\centering
\begin{tabular}{| c|c|c| }
 \hline
Fibre type & $\mathscr{E}^{1,0}$ & $\mathscr{E}^{0,1}$\tsep{2pt}
\\ \hline
$\mathrm{I}_n$ & $0$ & $0$ \\
$\mathrm{I}_n^*$ & $1/2$ & $1/2$ \\
$\mathrm{II}$ or $\mathrm{II}^*$ & $1/6$ & $5/6$ \\
$\mathrm{III}$ or $\mathrm{III}^*$ & $1/4$ & $3/4$\\
$\mathrm{IV}$ or $\mathrm{IV^*}$ & $1/3$ & $2/3$ \\
\hline
\end{tabular}
\caption{Values of $\alpha$ such that $\mathscr{E}^{p,q}_{\alpha} = \mathscr{E}^{p,q}$ for degenerations of elliptic curves.}\label{tab:elliptic}
\end{table}

Using this, the parabolic degrees of $\mathscr{E}^{1,0}$ and $\mathscr{E}^{0,1}$ can be determined easily once one knows their degrees. Furthermore, for an elliptic f\/ibration over $\mathbb{P}^1$, we may combine this with equation~\eqref{eq:pardeg} and Lemma \ref{lemma:conj} to show that $- \deg \mathscr{E}^{1,0} - \deg \mathscr{E}^{0,1}$ is equal to the number of f\/ibres around which monodromy is not unipotent. This is precisely \cite[Remark 2.4]{hnccytls}.
\end{Example}

\begin{Example}[$(1,1,1)$ variations of Hodge structure]\label{ex:K3}
Let us take a degeneration $\mathscr{X}$ of K3 surfaces over the complex unit disc $D$, so that the restriction of $\mathscr{X}$ to $D^* := D \setminus \{0\}$ is an $M$-polarized family of K3 surfaces, in the sense of \cite[Def\/inition~2.1]{flpk3sm}, for some rank~$19$ lattice~$M$. The transcendental lattices of the general f\/ibres give rise to a VHS of type $(1,1,1)$ over~$D^*$.

One can generally describe all possible monodromy matrices that can underlie such a $(1,1,1)$-VHS. Up to sign, they are just symmetric squares of parabolic and elliptic elements of $\mathrm{SL}_2(\mathbb{R})$; this comes from the fact that $\mathrm{O}(2,1)$ is, up to sign, a symmetric square of the group $\mathrm{SL}_2(\mathbb{R})$.

By \cite[Lemma 3.1]{hnccytls}, the monodromy of $\mathscr{X}$ around $0 \in D$ can be written in Jordan canonical form as either
\begin{gather*}\left(\begin{matrix} \lambda& 1 & 0 \\0 & \lambda & 1 \\ 0 & 0 & \lambda \end{matrix}\right) \qquad \text{or} \qquad \left(\begin{matrix} \lambda_1& 0 & 0 \\0 & \lambda_2 &0 \\ 0 & 0 & \lambda_3 \end{matrix}\right)\end{gather*}
for some $\lambda, \lambda_i \in \mathbb{C}$ roots of unity with $\sum\limits_{i=1}^3 \lambda_i \in \mathbb{Z}$. Cases that are relevant to our work \cite{cytfhrlpk3s} are
\begin{alignat*}{3}
& T_{\mathrm{un}} = \left(\begin{matrix} -1& 1 & 0 \\0 & -1& 1 \\ 0 & 0 & -1 \end{matrix}\right), \qquad && T_i = \left(\begin{matrix} \sqrt{-1}& 0 & 0 \\0 & - \sqrt{-1} & 0 \\ 0 & 0 & - 1\end{matrix}\right), & \\
& T_{\omega} = \left(\begin{matrix} - e^{2\pi i/3}& 0 & 0 \\0 & - e^{4\pi i/3} &0 \\ 0 & 0 & -1 \end{matrix}\right), \qquad && T_\mathrm{nod} = \left(\begin{matrix} -1 & 0 & 0 \\0 & 1 & 0 \\ 0 & 0 & 1 \end{matrix}\right), &
\end{alignat*}
and their powers.

As in the previous example, we can compute the f\/iltration on the degenerate Hodge structure from these matrices. The result is given in Table~\ref{tab:111}, which lists the values of $\alpha$ such that $\mathscr{E}^{2,0}_{\alpha} = \mathscr{E}^{2,0}$, $\mathscr{E}^{1,1}_{\alpha} = \mathscr{E}^{1,1}$, and $\mathscr{E}^{0,2}_{\alpha} = \mathscr{E}^{0,2}$ for each type of VHS.

\begin{table}
\centering
\begin{tabular}{ |c|c|c| c| }
 \hline
Monodromy matrix & $\mathscr{E}^{2,0}$ & $\mathscr{E}^{1,1}$ & $\mathscr{E}^{0,2}$\tsep{2pt} \\
\hline
$T_\mathrm{un}$ & $1/2$ & $1/2$ & $1/2$ \\
$T_\mathrm{un}^2$ & $0$ & $0$ & $0$ \\
$T_i$ & $1/4$ & $1/2$ & $3/4$ \\
$-T_i$ & $1/4$ & $0$ & $3/4$ \\
$T_i^2 $ & $1/2$ & $0$ & $1/2$ \\
$T_{\omega}$ & $1/6$ & $1/2$ & $5/6$ \\
$T_{\omega}^2$ & $1/3$ & $0$ & $2/3$ \\
$T_{\mathrm{nod}}$ & $0$ & $1/2$ & $0$ \\
\hline
\end{tabular}
\caption{Values of $\alpha$ such that $\mathscr{E}^{p,q}_{\alpha} = \mathscr{E}^{p,q}$ for $(1,1,1)$-VHS's.}\label{tab:111}
\end{table}

For a f\/ibration of K3 surfaces over $\mathbb{P}^1$, we may combine this with equation~\eqref{eq:pardeg} and Lemma~\ref{lemma:conj} to show that $-\deg \mathscr{E}^{2,0} - \deg \mathscr{E}^{0,2}$ computes the number of f\/ibres around which we have mono\-dromy matrix with Jordan normal form $T_{\mathrm{un}}$, $T_i$, $-T_i$, $T_i^2$, $T_\omega$ or $T_{\omega}^2$. If we adjust the def\/inition of~$\mathrm{II}$ in \cite[Section~3]{hnccytls} to be the set of points in~$\Delta$ whose monodromy matrices have at least two eigenvalues not equal to~$1$, then \cite[Remark~3.4]{hnccytls} holds.
\end{Example}

\subsection{Picard--Fuchs equation of a variation of Hodge structure}
Now let us take a look at how the Picard--Fuchs equation of a variation of Hodge structure arises; more details for everything in this section and the next may be found in \cite[Section 6.4]{lblefbc}. We start with the same basic setup as before: a~variation of Hodge structure $(\mathbb{V}, \mathscr{F}^\bullet, \nabla)$ over a~quasi-projective curve $C$ with its canonical extension to~$\overline{C}$. For the sake of simplicity, we will assume that $\overline{C} = \mathbb{P}^1$.

If we now choose a global section\footnote{More generally, if such a section does not exist, one may instead construct a Picard--Fuchs $D$-module; however this will not be relevant to our work.}~$\sigma$ of the Serre twist $\overline{\mathscr{F}}^\ell(i)$, where $\ell$ is the length of the Hodge f\/iltration and $i$ is some integer, then we can build a dif\/ferential equation associated to~$\sigma$. Taking a polarization $\langle -, - \rangle$ on~$\mathbb{V}$, we can assign to $\sigma$ a set of multivalued functions
\begin{gather*}s_v(t) = \langle \sigma, v \rangle,\end{gather*}
where $v$ is some f\/lat local section on $\mathbb{V}$ (or any f\/lat local section $v$ of $\mathscr{V} := \mathbb{V} \otimes \mathscr{O}_C$). The func\-tions~$s_v(t)$ give multivalued meromorphic functions on $\mathbb{P}^1$, with monodromy occurring only around points in $\Delta = \overline{C}\setminus C$. One can then produce a global dif\/ferential equation $L_\sigma$, of rank at most the rank of $\mathbb{V}$ on $C$, whose solution set is the set of functions~$s_v(t)$. The dif\/ferential operator $L_\sigma$ is a Fuchsian ODE (which will be discussed in the following section), called the \emph{Picard--Fuchs equation}.

The equation $L_\sigma$ is def\/ined as follows. For an appropriate choice of af\/f\/ine coordinate $t$ on $\overline{C} = \mathbb{P}^1$, we may assume that $\infty \in \Delta$, and therefore that the vector f\/ield~$d/dt$ is a global vector f\/ield on $C = \mathbb{P}^1 \setminus \Delta$. If~$\alpha$ and~$\beta$ are sections of~$\mathscr{V}$, then we have
\begin{gather*}\frac{d}{dt} \langle \alpha, \beta \rangle = \langle \nabla_{d/dt}\alpha, \beta \rangle \pm \langle \alpha, \nabla_{d/dt} \beta \rangle, \end{gather*}
where $\nabla_{d/dt} \alpha$ is def\/ined to be $\nabla(\alpha) \in \mathscr{V} \otimes \Omega_{C}$ paired with $d/dt$. Therefore,
\begin{gather*}\frac{ds_v(t)}{dt} = \langle \nabla_{d/dt} \sigma, v \rangle,\end{gather*}
since $v$ is chosen to be a f\/lat local section of~$\mathscr{V}$.

Since $\nabla^i_{d/dt}(\sigma)$ all live in $\mathscr{V}$, if $r = \rank \mathscr{V}$, then $\sigma,\nabla_{d/dt} \sigma,\dots, \nabla^{r}_{d/dt} \sigma$ must be linearly dependent over the f\/ield of meromorphic functions on $C$. Therefore, we can write an equation
\begin{gather*}\nabla^n_{d/dt}\sigma + f_1(t)\nabla^{n-1}_{d/dt} \sigma +\dots +f_{n}(t) \sigma = 0\end{gather*}
for some minimal $n \leq r$. Therefore, for any f\/lat local section $v$ of $\mathscr{V}$, we obtain the equation
\begin{gather*}\frac{d^ns_v(t)}{dt^n} + f_{1}(t)\frac{d^{n-1}s_v(t)}{dt^{n-1}} + \dots + f_{n}(t) s_v(t) = 0,\end{gather*}
by linearity of the operator $\langle -, v \rangle$. In other words, $s_v(t)$ is annihilated by the dif\/ferential operator
\begin{gather*}L_\sigma = \frac{d^n}{dt^n} + f_{1}(t) \frac{d^{n-1}}{dt^{n-1}} + \dots + f_n(t).\end{gather*}

On the other hand, to a rank $r$ ordinary dif\/ferential equation $L$ with regular singularities, we can assign to $L$ its solution sheaf $\mathrm{Sol}(L)$, which is a subsheaf of $\mathscr{O}_{C}$ spanned over $\mathbb{C}$ by the local solutions of $L$. The solution sheaf of $L_\sigma$ is precisely the subsheaf of $\mathscr{O}_C$ spanned locally by the functions $s_v(t)$.

\subsection{Fuchsian ODE's}
Let $L$ be a linear ordinary dif\/ferential equation in one variable, written as
\begin{gather*}\frac{d^n}{dt^n} + \sum_{i=1}^nf_i(t) \left( \frac{d}{dt}\right)^{n-i},\end{gather*}
where the $f_i$ are meromorphic functions in $t$. This equation can be rewritten in terms of the operator $\delta_t = t\frac{d}{dt}$ as
\begin{gather}\label{eq:delta}
\delta_t^n + \sum_{i}^{n}g_i(t) \delta_t^{n-i}
\end{gather}
for some meromorphic functions $g_1,\dots,g_n$. Indeed, one can check that
\begin{gather*}t^n \left(\frac{d}{dt}\right)^n = \delta_t(\delta_t-1) \cdots (\delta_t - n -1),\end{gather*}
so it is easy to translate between the two expressions for $L$.

A point $a \in \mathbb{P}^1$ is called \emph{nonsingular} if $f_i(t)$ is holomorphic at $a$, and is a \emph{regular singular point} if $(t-a)^i f_i(t)$ is holomorphic at $a$, for all $i \in \{1,\ldots,n\}$. Note that $0$ is a regular singular point if and only if the coef\/f\/icients $g_i(t)$ in equation~\eqref{eq:delta} are all holomorphic at $0$. A linear dif\/ferential equation of one variable with only regular singularities at all points in $\mathbb{P}^1$ is called a~\emph{Fuchsian ODE}. Regular singular points can be divided into \emph{apparent singularities}, around which monodromy acts as the identity on the solution sheaf, and \emph{actual singularities}, where it does not.

Let $L$ be a Fuchsian ODE with a regular singular point at $0 \in \mathbb{P}^1$. The polynomial
\begin{gather*}T^n + \sum_{i=1}^n g_i(0) T^{n-i}\end{gather*}
is called the \emph{indicial equation} of $L$ at $0$, and its roots are called the \emph{characteristic exponents} of~$L$ at~$0$. For an arbitrary point $a \in \mathbb{P}^1$, one may compute the indicial equation and characteristic exponents of $L$ at $a$ by making the variable change $t \mapsto t-a$. We will denote the characteristic exponents of~$L$ at $a$ by $\mu^{a}_1,\dots, \mu^a_n$; throughout the paper we will assume that the $\mu^{a}_i$ are ordered by magnitude, so that $\mu_1^a \leq \mu_2^a \leq \cdots \leq \mu_n^a$. Often, the data of the regular singular points $a_i$ and their characteristic exponents $\mu_i^{a_j}$ are arranged into a matrix known as the \emph{Riemann scheme} of~$L$, which looks like
\begin{gather*}\left\{ \begin{matrix}
a_1 & a_2 & \cdots & a_k \\ \hline
\mu^{a_1}_1 & \mu^{a_2}_1 & \cdots & \mu^{a_k}_1
\\
\vdots & \vdots & \ddots & \vdots \\
\mu^{a_1}_n & \mu^{a_2}_n & \cdots & \mu^{a_k}_n
\end{matrix} \right\}.\end{gather*}

Information about the monodromy of the solution sheaf of $L$ can be read of\/f from the Riemann scheme. For instance, if~$a$ is a regular singular point whose characteristic exponents are all integers, then monodromy around $a$ is unipotent, i.e., all of its eigenvalues are $1$. As a partial converse, if~$a$ is an apparent singularity, then the characteristic exponents at $a$ are all integers.

Finally, note that it also makes sense to compute the characteristic exponents at a nonsingular point $a$ of $L$. If one does so, one obtains $(\mu_1^a,\mu_2^a,\ldots,\mu_n^a) = (j,j+1,\ldots,j+n-1)$, for some integer $j$.

\begin{Remark}
It is often natural to scale the solutions to a dif\/ferential equation by a meromorphic function $h(t)$ on $\mathbb{P}^1$ to obtain a \emph{twisted} dif\/ferential equation, denoted $\prescript{h}{}L$. This operation acts on the characteristic exponents of $L$ by an overall scaling $\prescript{h}{}\mu_i^a = \mu_i^a + \mathrm{ord}_a(h)$, where $\mathrm{ord}_a(h)$ is the order of $h$ at $a$. Importantly, the dif\/ferences between the characteristic exponents are not af\/fected by the process of twisting. In the main results of this paper, we will only need to know the dif\/ferences between the characteristic exponents, so knowing $L_\sigma$ up to twist is enough.

This is important in applications, since one often only knows the Picard--Fuchs equation of a~variation of Hodge structure up to twist: for instance, this is all that the Grif\/f\/iths--Dwork method~\cite{opcri1,opcri2} for computing Picard--Fuchs equations of hypersurfaces guarantees. In Theorem~\ref{thm:ekmz}, only the dif\/ference in characteristic exponents is relevant; therefore, our Hodge bundle computations can be done with~$\prescript{h}{}L$ for any meromorphic function~$h$.
\end{Remark}

\begin{Remark}
One can also twist by multivalued functions $h$ on $\mathbb{P}^1$. A particular choice of (possibly) multivalued function $h$ can be made so that $\prescript{h}{}L$ has $f_{n-1}(t) = 0$. This is called the {\it projective normal form} of $L$. The process of twisting by a multivalued function \emph{does} af\/fect the dif\/ferences of characteristic exponents in a signif\/icant way. For this reason, we will avoid using the projective normal form in this paper.
\end{Remark}

\begin{Remark}
The characteristic exponents of $L$ at a point $a$ allow us to compute local solutions for $L$ near $a$. We remark that in general, the solutions of $Lf = 0$ around the point $a$ have the form $t^{\mu_i^a}P_i(t)$ for some holomorphic functions~$P_i(t)$, modulo terms with logarithmic singularities at $a$. For instance, if all~$\mu_i^a$'s are the same, then there is an ordered basis of local solutions~$\{s_i(t)\}$ around $a$ given inductively by
\begin{gather*}
s_i(t) = t^{\mu_i^a} P_i(t) + \sum_{j=1}^{i-1} \frac{s_{j}(t) \log t}{j!},
\end{gather*}
where the $P_i$ are holomorphic functions which do not vanish at $a$.

If none of the dif\/ferences between characteristic exponents are integers, then there are no terms with logarithmic singularities and the local solutions around $a$ all have the form $t^{\mu_i^a}P_i(t)$, for some holomorphic functions~$P_i(t)$. In this case, if~$L$ is the Picard--Fuchs equation of a~variation of Hodge structure $(\mathbb{V}, \mathscr{F}^\bullet, \nabla)$, then Deligne's canonical extension $\overline{\mathscr{V}}$ of~$\mathscr{V}$ is spanned by the set of local sections $\{t^{\lfloor \mu_i^a \rfloor} P_i(t)\}$ and the subspaces spanned by $t^{\lfloor \mu_i^a\rfloor}P_i(t)$ are the parabolic f\/iltrands~$V_{\alpha}$, where $ \alpha := \mu_i^a - \lfloor \mu_i^a \rfloor$. A more in-depth analysis of the Frobenius method shows that this is true in general.
\end{Remark}

\subsection{Important results}\label{sec:impres}

Here we state some important results which, in combination, will allow us to compute Hodge numbers of a variation of Hodge structure of $(1,1,\dots,1)$-type if we know the Picard--Fuchs equation $L_\sigma$. The f\/irst is a result of Eskin, Kontsevich, M\"oller, and Zorich \cite{lblefbc}, which says that the characteristic exponents of the Picard--Fuchs equation $L_\sigma$ corresponding to a variation of Hodge structure of $(1,1,\dots, 1)$-type can be used to compute the Kodaira--Spencer maps on the bundles $\mathscr{E}^{p,q}$.

\begin{Theorem}[{\cite[Lemma 6.3]{lblefbc}}] \label{thm:ekmz}
Let $(\mathbb{V},{\mathscr{F}}^\bullet, \nabla)$ be a polarized $\mathbb{R}$-VHS of $(1,1,\dots, 1)$-type over a Zariski open subset $\mathbb{P}^1 \setminus \Delta$, let $\overline{\mathscr{F}}^\bullet$ be its quasi-canonical extension to $\mathbb{P}^1$, let $\mathscr{E}^{p,q}$ be the graded pieces of $\overline{\mathscr{F}}^\bullet$, and let $L_\sigma$ be the Picard--Fuchs differential equation associated to a meromorphic section of $\sigma$ of $\mathscr{E}^{\ell,0}$. Then for $i < \ell/2$, the Kodaira--Spencer map $\theta_i \colon \mathscr{E}^{\ell-i,i} \rightarrow \mathscr{E}^{\ell-1-i,1+i} \otimes \Omega_{\mathbb{P}^1}(\Delta)$ is described as follows.
\begin{enumerate}\itemsep=0pt
\item[$1.$] If $p$ is a nonsingular point of $L_\sigma$, then $\theta_i$ is a local isomorphism at $p$.
\item[$2.$] If $p$ is an apparent singularity of $L_\sigma$, then $\theta_i$ has cokernel of length $\mu_{i+2}^p -\mu_{i+1}^p-1$.
\item[$3.$] If $p$ is an actual singularity of $L_\sigma$, then $\theta_i$ has cokernel of length $\lfloor \mu_{i+2}^p \rfloor - \lfloor \mu_{i+1}^p\rfloor$.
\end{enumerate}
\end{Theorem}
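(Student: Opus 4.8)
The plan is to reduce the statement to a local computation of the order of vanishing of $\theta_i$ at each point $p$, and then to carry out that computation using the cyclic structure of the variation together with the Frobenius solutions of $L_\sigma$.

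First I would record the two structural facts that drive the reduction. Because the variation is of $(1,1,\dots,1)$-type, every $\mathscr{E}^{p,q}$ is a line bundle, so each $\theta_i\colon \mathscr{E}^{\ell-i,i}\to\mathscr{E}^{\ell-1-i,1+i}\otimes\Omega_{\mathbb{P}^1}(\Delta)$ is a morphism of line bundles on a curve; away from its zero locus it is an isomorphism, and at $p$ its cokernel has length $\operatorname{ord}_p(\theta_i)$. Second, since $\sigma$ is a cyclic vector for $L_\sigma$ (the operator has full rank $n=\ell+1$), the iterated images $\tau_j:=\nabla_\partial^{\,j}\sigma$ span the Hodge flag, where $\partial$ is the vector field dual to the local generator of $\Omega_{\mathbb{P}^1}(\Delta)$: the bundle $\overline{\mathscr{F}}^{\ell-j}$ is the saturation of $\langle\tau_0,\dots,\tau_j\rangle$, and the class $[\tau_j]\in\mathscr{E}^{\ell-j,j}$ is obtained from $\sigma$ by the contracted composite $\theta_{j-1}\circ\cdots\circ\theta_0$. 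Consequently $\operatorname{ord}_p(\theta_i)=\operatorname{ord}_p[\tau_{i+1}]-\operatorname{ord}_p[\tau_i]$, and writing $D_j:=\operatorname{ord}_p(\tau_0\wedge\cdots\wedge\tau_j)$ for the vanishing order of this section of $\det\overline{\mathscr{F}}^{\ell-j}=\bigotimes_{k\le j}\mathscr{E}^{\ell-k,k}$, one has $\operatorname{ord}_p[\tau_j]=D_j-D_{j-1}$. The problem thus becomes the evaluation of the $D_j$; note that only differences of the $D_j$ enter, so the outcome is insensitive to the normalisation (equivalently, the twist) of $\sigma$, exactly as the twist-invariance remark demands.

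The heart of the argument is to compute $D_j$ by pairing $\tau_0\wedge\cdots\wedge\tau_j$ against flat multisections. By flatness, $\langle\nabla_\partial^{\,a}\sigma,v\rangle=\partial^a\langle\sigma,v\rangle=\partial^a s_v$, so these pairings are Wronskian-type determinants of the solutions $s_i\sim t^{\mu_i^p}$ of $L_\sigma$. If $p\in\mathbb{P}^1\setminus\Delta$ (a nonsingular or an apparent singular point, where the local system and hence $\overline{\mathscr{V}}$ extend and the flat frame \emph{is} the canonical frame), I would take $\partial=d/dt$, matching the generator $dt$ of $\Omega_{\mathbb{P}^1}(\Delta)=\Omega_{\mathbb{P}^1}$ there; the Wronskian of $t^{\mu_{i_0}},\dots,t^{\mu_{i_j}}$ has order $\sum_b\mu_{i_b}-\binom{j+1}{2}$, minimised by the smallest exponents, so $D_j=\sum_{k=1}^{j+1}\mu_k^p-\binom{j+1}{2}$ and hence $\operatorname{ord}_p(\theta_i)=\mu_{i+2}^p-\mu_{i+1}^p-1$. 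This yields cases~$1$ and~$2$ simultaneously, the nonsingular case being the instance of consecutive integer exponents, where the difference is $0$. If instead $p\in\Delta$, I would use $\delta_t=t\,d/dt$, dual to the generator $dt/t$ of $\Omega_{\mathbb{P}^1}(\Delta)$, which preserves $\overline{\mathscr{V}}$; diagonalising the residue of $\overline{\nabla}$ to leading order gives an eigenframe $f_i$ with $\nabla_{\delta_t}f_i=\mu_i^p f_i$, related to the single-valued Deligne frame $\tilde f_i$ (whose residue eigenvalues lie in $[0,1)$) by $f_i=t^{\lfloor\mu_i^p\rfloor}\tilde f_i$. Expanding $\sigma=\sum_i a_i f_i$, so that $\tau_j=\sum_i a_i(\mu_i^p)^j t^{\lfloor\mu_i^p\rfloor}\tilde f_i$, the wedge coefficient in the Deligne frame is a Vandermonde in the $\mu_i^p$ times $t^{\sum_k\lfloor\mu_{i_k}^p\rfloor}$; minimising gives $D_j=\sum_{k=1}^{j+1}\lfloor\mu_k^p\rfloor$ and $\operatorname{ord}_p(\theta_i)=\lfloor\mu_{i+2}^p\rfloor-\lfloor\mu_{i+1}^p\rfloor$, which is case~$3$.

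The main obstacle is that these leading-order Vandermonde computations assume the relevant characteristic exponents are distinct; precisely when they coincide — the generic situation at an apparent singularity, and the case of non-trivial unipotent part at an actual singularity — the Vandermonde degenerates and the Frobenius solutions acquire logarithmic terms $t^{\mu}(\log t)^m$. The resolution is the in-depth Frobenius analysis promised in the final remark of Section~\ref{sec:VHS}: within a single Jordan block the Wronskian of $t^\mu,t^\mu\log t,\dots,t^\mu(\log t)^{k-1}$ is a nonzero constant multiple of $t^{k\mu-\binom{k}{2}}$, because the Wronskian of $1,\log t,\dots,(\log t)^{k-1}$ in the variable $\log t$ is a nonzero constant; and the Deligne frame must be built from the generalised eigenspaces using the nilpotent operators $N_\alpha$ rather than a naive diagonalisation. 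I would verify that, with the exponents still ordered $\mu_1^p\le\cdots\le\mu_n^p$ and counted with multiplicity, the same leading orders $D_j$ result, so the formulae above persist verbatim. The stated hypothesis $i<\ell/2$ delimits the range in which the result is asserted; the identical computation applies for all $i$, and in any case the complementary maps are fixed by the conjugation symmetry of Lemma~\ref{lemma:conj}, so no separate argument is required there.
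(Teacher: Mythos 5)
The first thing to say is that the paper does not prove Theorem~\ref{thm:ekmz} at all: it is imported, with attribution, as Lemma~6.3 of Eskin--Kontsevich--M\"oller--Zorich \cite{lblefbc}, and the paper's own work begins only downstream, where the theorem is combined with equation~\eqref{eq:pardeg}, Lemma~\ref{lemma:conj} and the results of \cite{hca1pfcy3f,hnccytls}. So there is no internal proof to compare you against; the benchmark is the source, and your proposal is essentially a reconstruction of that local analysis. Your reduction is correct and is the right way to set the problem up: since every $\mathscr{E}^{p,q}$ is a line bundle, the cokernel length of $\theta_i$ at $p$ is its vanishing order there, which equals $D_{i+1}-2D_i+D_{i-1}$ with $D_j=\operatorname{ord}_p(\tau_0\wedge\cdots\wedge\tau_j)$ measured against the Deligne lattice; pairing with flat (multi)sections turns the $D_j$ into Wronskians of the Frobenius solutions; and your choice of $d/dt$ at points where the local system extends versus $\delta_t$ at points of $\Delta$ is exactly what produces the extra $-1$ in case~2 relative to case~3. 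The computations in the semisimple, distinct-exponent situations are sound as written, granted one point you use silently: cyclicity of $\sigma$ must be shown to force every component of $\sigma$ along the local eigen-decomposition to be nonzero (if some component vanished identically, the solution space of $L_\sigma$ would have dimension less than $n$); without this the leading Vandermonde coefficient could vanish and your formulae for the $D_j$ would only be inequalities.

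The genuine gap is the case you defer. At an actual singularity with non-semisimple monodromy, or with exponents differing by integers, you assert that ``the same leading orders $D_j$ result'' once Jordan-block Wronskians and the $N_\alpha$-corrected frame are inserted; but this is not a degenerate side case --- it is the case the paper relies on every time it applies the theorem (the unipotent matrices $T_0$ and $T_{\mathrm{un}}$, and the $T_1$-type points with Riemann scheme $(0,1,1,2)$), and it is where the real work lies. What must actually be proved is that the transition between the flat wedge frame (in which your Wronskians are the coefficients) and the Deligne wedge frame is, block by block in the fractional parts $\alpha$, a power $t^{\sum\alpha}$ times a unipotent matrix whose entries are polynomials in $\log t$, and that these logarithmic entries cannot change the minimal $t$-adic order of the coefficient functions; the mixed situations, such as a Jordan block whose exponents differ by a nonzero integer, must be included in that verification. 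Two smaller corrections. First, at an apparent singularity the exponents are automatically $n$ \emph{distinct} integers --- trivial monodromy forbids logarithms, and a space of single-valued solutions has pairwise distinct valuations --- so the degeneration you worry about there never occurs; it occurs only at actual singularities. Second, the Jordan-block Wronskian you quote, $t^{k\mu-\binom{k}{2}}$, is the $d/dt$-Wronskian, whereas at points of $\Delta$ you need its $\delta_t$-analogue, a nonzero constant times $t^{k\mu}$; the same change-of-variables argument gives it, but the two normalisations must not be mixed, or the $-1$'s in the final formulae come out wrong.
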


\begin{Remark}
 We record the observation that Eskin, Kontsevich, M\"oller, and Zorich use Theo\-rem~\ref{thm:ekmz} to compute the parabolic degrees of the Hodge bundles of hypergeometric variations of Hodge structure. Specif\/ically, they are interested in the 14 VHS's of $(1,1,1,1)$-type on $\mathbb{P}^1$ found by Doran and Morgan \cite{doranmorgan}. However, it seems as if the authors of \cite{lblefbc} were unaware of the fact that this result follows easily from the data in \cite{hnccytls}, in which the degrees of $\mathscr{E}^{3,0}$ and $\mathscr{E}^{2,1}$ are computed for all fourteen $(1,1,1,1)$-type variations of Hodge structure of hypergeometric type. We will see later that Theorem \ref{thm:ekmz} can be used to perform computations that neither the authors of \cite{hca1pfcy3f} nor \cite{hnccytls} were able to perform.
\end{Remark}

This result is useful because, if we know the parabolic f\/iltration of each bundle $\mathscr{E}^{p,q}$ and the Picard--Fuchs equation $L_\sigma$, then knowing the degrees of all maps $\theta_i$ allows us to easily compute the degrees of $\mathscr{E}^{p,q}$ when each $\mathscr{E}^{p,q}$ is a line bundle (i.e. when $L_{\sigma}$ underlies a variation of Hodge structure of weight $d$ and type $(1,1,\ldots,1)$).

Next we will state several results due to del Angel, M\"uller-Stach, van Straten and Zuo, and Hollborn and M\"uller-Stach, which appear in \cite{hca1pfcy3f,hnccytls}. These results say that, in the situations that interest us, we can compute Hodge numbers of $H^1(\mathbb{P}^1,j_*\mathbb{V})$ (where $j\colon C \hookrightarrow \overline{C} = \mathbb{P}^1$ denotes the inclusion) using the degrees of $\mathscr{E}^{p,q}$ and the local monodromy data of $\mathbb{V}$. According to Theorem~\ref{thm:ekmz}, all of this information can be obtained from $L_\sigma$.

\begin{Theorem}[\cite{hca1pfcy3f,hnccytls}]\label{theorem:msh}
Let $\mathbb{V}$ be a local system on $\mathbb{P}^1 \setminus \Delta$ which supports a variation of Hodge structure of weight $n$ and $(1,1,\dots,1)$-type. Then the Hodge number $h^{0,n+1}$ of $H^1(\mathbb{P}^1,j_*\mathbb{V})$ is given by
\begin{gather*}h^{0,n+1} = h^1\big(\mathbb{P}^1,\mathscr{E}^{0,n}\big) = h^0\big(\mathbb{P}^1, \mathscr{O}_{\mathbb{P}^1}\big({-}2-\deg \mathscr{E}^{0,n}\big)\big).\end{gather*}
\end{Theorem}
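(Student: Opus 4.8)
The plan is to realize Zucker's weight-$(n+1)$ Hodge structure on $H^1(\mathbb{P}^1, j_*\mathbb{V})$ via the filtered logarithmic de Rham complex attached to the canonical extension $(\overline{\mathscr{V}}, \overline{\mathscr{F}}^\bullet, \overline{\nabla})$ set up above, and then to isolate the single graded piece that accounts for $h^{0,n+1}$.

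First I would recall from Zucker \cite{htdcl2cpm} that the parabolic cohomology $H^1(\mathbb{P}^1, j_*\mathbb{V})$ is the first hypercohomology of the two-term logarithmic de Rham complex
\[ DR^\bullet = \big[\,\overline{\mathscr{V}} \xrightarrow{\ \overline{\nabla}\ } \overline{\mathscr{V}} \otimes \Omega_{\mathbb{P}^1}(\Delta)\,\big], \]
placed in degrees $0$ and $1$, and that the Hodge filtration on this weight-$(n+1)$ structure is induced by the filtration $F^p DR^\bullet = [\,\overline{\mathscr{F}}^p \to \overline{\mathscr{F}}^{p-1}\otimes\Omega_{\mathbb{P}^1}(\Delta)\,]$, the shift by one in the degree-$1$ term being exactly Griffiths transversality. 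The key input I would record here is that the Hodge-to-de Rham spectral sequence of this filtered complex degenerates at $E_1$ (part of Zucker's purity theorem), so that $\operatorname{Gr}_F^p H^1(\mathbb{P}^1, j_*\mathbb{V}) \cong \mathbb{H}^1(\mathbb{P}^1, \operatorname{Gr}_F^p DR^\bullet)$ for every $p$.

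Next I would compute the $p=0$ graded piece. Since the Hodge structure is pure of weight $n+1$, the piece $\operatorname{Gr}_F^0 H^1$ is precisely $H^{0,n+1}$, so $h^{0,n+1} = \dim \mathbb{H}^1(\mathbb{P}^1, \operatorname{Gr}_F^0 DR^\bullet)$. The graded complex $\operatorname{Gr}_F^0 DR^\bullet$ has $\mathscr{E}^{0,n} = \overline{\mathscr{F}}^0/\overline{\mathscr{F}}^1$ in degree $0$ and $\mathscr{E}^{-1,n+1}\otimes \Omega_{\mathbb{P}^1}(\Delta)$ in degree $1$; but the filtration terminates at $\overline{\mathscr{F}}^{-1} = \overline{\mathscr{F}}^0 = \overline{\mathscr{V}}$, so $\mathscr{E}^{-1,n+1}=0$ and the degree-$1$ term vanishes. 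Thus $\operatorname{Gr}_F^0 DR^\bullet$ is just $\mathscr{E}^{0,n}$ concentrated in degree $0$, and its first hypercohomology is the ordinary sheaf cohomology $H^1(\mathbb{P}^1, \mathscr{E}^{0,n})$; this gives the first equality. Note that this extreme piece, unlike the interior Hodge numbers, requires no analysis of the Kodaira--Spencer maps $\theta_i$ of Theorem~\ref{thm:ekmz}, only the fact that the filtration stops at $\overline{\mathscr{F}}^0$. The remaining equality is then pure $\mathbb{P}^1$-geometry: the $(1,1,\dots,1)$-type hypothesis makes $\mathscr{E}^{0,n}$ a line bundle, hence isomorphic to $\mathscr{O}_{\mathbb{P}^1}(\deg \mathscr{E}^{0,n})$, and Serre duality gives $h^1(\mathbb{P}^1, \mathscr{E}^{0,n}) = h^0(\mathbb{P}^1, (\mathscr{E}^{0,n})^\vee \otimes \omega_{\mathbb{P}^1}) = h^0(\mathbb{P}^1, \mathscr{O}_{\mathbb{P}^1}(-2-\deg \mathscr{E}^{0,n}))$.

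The routine parts are the graded computation and the Serre duality step; the genuine content, which I would treat most carefully, is the identification in the first paragraph. The delicate point is that the hypercohomology of the logarithmic complex must compute the pure parabolic cohomology $H^1(\mathbb{P}^1, j_*\mathbb{V})$ (the image of $H^1_c \to H^1$) rather than the mixed $H^1(\mathbb{P}^1\setminus\Delta, \mathbb{V})$, and that the induced filtration is genuinely Zucker's. This hinges on using precisely the canonical extension $\overline{\mathscr{V}}$ with residue eigenvalues in $[0,1)$ fixed earlier, together with Zucker's $L^2$-comparison; pinning down the extension and its parabolic normalization is exactly what makes the final answer involve the \emph{ordinary} degree $\deg\mathscr{E}^{0,n}$ rather than a parabolic degree. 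I would therefore invoke \cite{htdcl2cpm} (and the parallel treatments in \cite{hca1pfcy3f, hnccytls}) for this comparison rather than reprove it.
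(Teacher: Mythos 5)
The paper itself contains no proof of Theorem~\ref{theorem:msh}: it is imported wholesale from \cite{hca1pfcy3f, hnccytls}. So the only meaningful comparison is with the arguments in those references, and your reconstruction follows essentially their route: Zucker's filtered logarithmic complex, $E_1$-degeneration, collapse of $\mathrm{Gr}^0_F$ to a single sheaf for filtration-length reasons, then Serre duality for a line bundle on $\mathbb{P}^1$ (the line bundle property coming from the $(1,1,\dots,1)$ hypothesis). Both displayed equalities are handled correctly under that machinery.

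There is, however, one genuine imprecision, and it sits exactly at the point you flag as delicate but then resolve the wrong way. As written, your opening identification is false: the hypercohomology of $\bigl[\,\overline{\mathscr{V}} \to \overline{\mathscr{V}}\otimes\Omega_{\mathbb{P}^1}(\Delta)\,\bigr]$, with $\overline{\mathscr{V}}$ the canonical extension normalized by residue eigenvalues in $[0,1)$, is $H^1(\mathbb{P}^1\setminus\Delta,\mathbb{V}) = H^1(\mathbb{P}^1, Rj_*\mathbb{V})$ by Deligne's theorem, \emph{not} $H^1(\mathbb{P}^1, j_*\mathbb{V})$; the two differ whenever some local monodromy operator $T_p$ has non-trivial coinvariants, the quotient injecting into $\bigoplus_{p\in\Delta}\mathbb{V}/(T_p-1)\mathbb{V}$, and this quotient is precisely the non-pure part of the mixed Hodge structure. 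In particular, your claim that the comparison ``hinges on using precisely the canonical extension with residue eigenvalues in $[0,1)$'' has the logic backwards: that normalization is exactly what produces $Rj_*$ rather than $j_*$. The correct complex (the one Zucker's comparison in \cite{htdcl2cpm} actually produces, and which \cite{hca1pfcy3f, hnccytls} use) keeps $\overline{\mathscr{V}}$ in degree zero but shrinks the degree-one term to the subsheaf of $\overline{\mathscr{V}}\otimes\Omega_{\mathbb{P}^1}(\Delta)$ whose residue at each $p\in\Delta$ lies in the image of $\mathrm{Res}_p\overline{\nabla}$ (equivalently, one realizes parabolic cohomology as the image of the compactly supported variant, built from the $(0,1]$-normalized extension, inside the open one). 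Fortunately the repair costs you nothing: the modification changes only the degree-one term, and $\mathrm{Gr}^0_F$ of that term vanishes for exactly the filtration-length reason you give, whichever subsheaf is used; so $\mathrm{Gr}^0_F$ of the corrected complex is still $\mathscr{E}^{0,n}$ concentrated in degree zero, and with Zucker's degeneration for the corrected complex the remainder of your argument, including the Serre duality step, goes through verbatim. Had you instead computed with the complex you literally wrote down, you would have obtained the Hodge numbers of the generally larger group $H^1(\mathbb{P}^1\setminus\Delta,\mathbb{V})$, which is the wrong statement.
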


In the case where $n = 1$ or $2$ in the above theorem, then we can deduce the rest of the Hodge numbers of the parabolic cohomology $H^1(\mathbb{P}^1,j_*\mathbb{V})$ by Hodge symmetry and the following result, which generalizes the classical Riemann--Hurwitz formula (presumably this result is classically known, but the only reference that we know is~\cite{hca1pfcy3f}).

\begin{Proposition}[{\cite[Proposition~3.6]{hca1pfcy3f}}]\label{prop:rank} Let $\mathbb{V}$ be a local system on a quasi-projective curve~$C$ and let $j\colon C \hookrightarrow \overline{C}$ be the smooth completion. Let $\gamma_q$ be the local monodromy matrix around the point~$q$ in $\Delta := \overline{C} \setminus C$, acting on~$\mathbb{V}_p$ for some basepoint~$p$ in~$C$, and define $R(q) := \dim \mathbb{V}_p - \dim \mathbb{V}^{\gamma_q}_p$, where $\mathbb{V}^{\gamma_q}_p$ denotes the part of $\mathbb{V}_p$ fixed under $\gamma_q$. Then
\begin{gather*}h^1\big(\overline{C},j_*\mathbb{V}\big) = \sum_{q \in \Delta} R(q) + \big(2g\big(\overline{C}\big) -2\big)\rank \mathbb{V}.\end{gather*}
\end{Proposition}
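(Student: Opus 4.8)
My plan is to identify the right-hand side with the negative Euler characteristic $-\chi(\overline{C}, j_*\mathbb{V})$ and then to account separately for the contributions of $h^0$ and $h^2$. The starting point is the comparison of the two canonical extensions of $\mathbb{V}$ across the missing points. Writing $i_q\colon \{q\} \hookrightarrow \overline{C}$, the stalk of $j_*\mathbb{V}$ at a point $q \in \Delta$ is the space of local monodromy invariants $\mathbb{V}_p^{\gamma_q}$ (the sections of the local system over a small punctured disc about $q$), whereas the extension by zero $j_!\mathbb{V}$ has vanishing stalk there. Consequently there is a short exact sequence of sheaves on $\overline{C}$,
\begin{gather*}
0 \longrightarrow j_!\mathbb{V} \longrightarrow j_*\mathbb{V} \longrightarrow \bigoplus_{q \in \Delta} (i_q)_*\mathbb{V}_p^{\gamma_q} \longrightarrow 0,
\end{gather*}
whose cokernel is a skyscraper sheaf supported on $\Delta$. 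Additivity of the Euler characteristic then gives $\chi(\overline{C}, j_*\mathbb{V}) = \chi(\overline{C}, j_!\mathbb{V}) + \sum_{q\in\Delta} \dim \mathbb{V}_p^{\gamma_q}$.

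Next I would evaluate the two pieces. Since $\overline{C}$ is compact and $j_!$ is extension by zero, $\chi(\overline{C}, j_!\mathbb{V}) = \chi_c(C, \mathbb{V})$, and because the compactly supported Euler characteristic of a local system depends only on its rank this equals $\rank(\mathbb{V}) \cdot \chi_c(C)$. Removing the $|\Delta|$ points of $\Delta$ from the genus-$g(\overline{C})$ curve gives $\chi_c(C) = 2 - 2g(\overline{C}) - |\Delta|$. Substituting, and using the defining relation $\dim \mathbb{V}_p^{\gamma_q} = \rank\mathbb{V} - R(q)$, the terms involving $|\Delta|$ cancel and I obtain
\begin{gather*}
-\chi\big(\overline{C}, j_*\mathbb{V}\big) = \sum_{q \in \Delta} R(q) + \big(2g\big(\overline{C}\big) - 2\big)\rank\mathbb{V},
\end{gather*}
which is precisely the right-hand side of the proposition. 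As $\chi(\overline{C}, j_*\mathbb{V}) = h^0 - h^1 + h^2$, this shows that the asserted formula is equivalent to the vanishing $h^0(\overline{C}, j_*\mathbb{V}) = h^2(\overline{C}, j_*\mathbb{V}) = 0$.

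This vanishing is the step I expect to require the most care, and some hypothesis on $\mathbb{V}$ is unavoidable: for $\mathbb{V} = \mathbb{C}$ the trivial rank-one system on $\mathbb{P}^1\setminus\{0,\infty\}$ one has $h^0 = h^2 = 1$ and the stated formula fails. Now $H^0(\overline{C}, j_*\mathbb{V}) = H^0(C, \mathbb{V})$ is the space of global flat sections of $\mathbb{V}$, i.e.\ its global monodromy invariants; and because on a curve $j_*\mathbb{V}$ is the intermediate extension of $\mathbb{V}$, it is Verdier self-dual up to dualizing the coefficient system, so Poincar\'e--Verdier duality identifies $H^2(\overline{C}, j_*\mathbb{V})$ with the dual of $H^0(C, \mathbb{V}^\vee)$, the global invariants of the dual local system $\mathbb{V}^\vee$. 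Both therefore vanish as soon as $\mathbb{V}$ admits neither a trivial sub- nor a trivial quotient local system. I would close by recording that this holds for the local systems of interest here --- those underlying a polarizable variation of Hodge structure with no constant sub-variation --- since such a variation is semisimple, so that the absence of a trivial summand forces the global invariants of both $\mathbb{V}$ and $\mathbb{V}^\vee$ to vanish. Granting this, $-\chi = h^1$ and the proposition follows; the genuine subtlety is thus isolating the hypothesis that makes $h^0$ and $h^2$ disappear, the Euler-characteristic bookkeeping being otherwise routine.
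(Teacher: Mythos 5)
The paper does not actually prove this proposition: it is imported wholesale, with citation, from \cite{hca1pfcy3f}, so there is no internal argument to measure yours against. On its own terms your proof is correct, and it is the standard one: the exact sequence $0 \to j_!\mathbb{V} \to j_*\mathbb{V} \to \bigoplus_{q \in \Delta}(i_q)_*\mathbb{V}_p^{\gamma_q} \to 0$, additivity of $\chi$, and $\chi\big(\overline{C},j_!\mathbb{V}\big) = \rank\mathbb{V}\,\big(2-2g\big(\overline{C}\big)-|\Delta|\big)$ give $-\chi\big(\overline{C},j_*\mathbb{V}\big) = \sum_{q}R(q) + \big(2g\big(\overline{C}\big)-2\big)\rank\mathbb{V}$, and the identifications $H^0\big(\overline{C},j_*\mathbb{V}\big) = H^0(C,\mathbb{V})$ and $H^2\big(\overline{C},j_*\mathbb{V}\big) \cong H^0(C,\mathbb{V}^\vee)^*$ (self-duality of the minimal extension on a curve) reduce the proposition to the vanishing of the global invariants of $\mathbb{V}$ and of $\mathbb{V}^\vee$. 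Your sharpest observation is that this vanishing is not a formality but a genuine hypothesis: as transcribed in the paper, with no condition on $\mathbb{V}$, the formula is false, and your counterexample of the trivial rank-one system on $\mathbb{P}^1\setminus\{0,\infty\}$ (left-hand side $0$, right-hand side $-2$) is decisive. The statement therefore needs an implicit irreducibility/no-trivial-factor hypothesis, which the cited source has and the paper suppresses; this is harmless for the applications made here, where the local systems underlie non-isotrivial polarizable $(1,1,\dots,1)$-type variations of Hodge structure and are semisimple with no trivial summand (by the fixed-part and semisimplicity theorems), but your write-up rightly makes explicit the caveat that the paper leaves silent.
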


 Finally, if $\mathbb{V}$ is a VHS of $(1,1,1,1)$-type, then Hollborn and M\"uller-Stach show how to compute the Hodge number~$h^{1,3}$ of the parabolic cohomology $H^1(\mathbb{P}^1,j_*\mathbb{V})$ from the degrees of its Hodge bundles.

\begin{Theorem}[{\cite[Theorem 4.3]{hnccytls}}]\label{thm:mash}
If $\mathbb{V}$ supports a variation of Hodge structure of type $(1,1,1,1)$ over a Zariski open subset $j\colon C \hookrightarrow \mathbb{P}^1$, then the Hodge number $h^{1,3}$ of $H^1(\mathbb{P}^1,j_*\mathbb{V})$ is given by
\begin{gather*}h^{1,3} = -2 + b -a + |\mathrm{II}| + |\mathrm{III}| + |\mathrm{IV}|,\end{gather*}
where $a = \mathrm{deg}(\mathscr{E}^{3,0})$ and $b = \deg (\mathscr{E}^{2,1})$. The sets $\mathrm{II}$ and $\mathrm{III}$ are the subsets of $\Delta = \mathbb{P}^1 \setminus C$ whose local monodromy matrices have Jordan normal forms
\begin{gather*}\left(\begin{matrix} 1 & 1 & 0 & 0 \\ 0 & 1 & 1 & 0 \\ 0 & 0 & 1 & 1 \\ 0 & 0 & 0 & 1 \end{matrix}\right), \qquad \text{and} \qquad \left(\begin{matrix} 1 & 1 & 0 & 0 \\ 0 & 1 & 0 & 0 \\ 0 & 0 & 1 & 1 \\ 0 & 0 & 0 & 1 \end{matrix}\right)\end{gather*}
respectively. The set~$\mathrm{IV}$ is the subset of $\Delta$ for which the monodromy matrix is strictly quasi-unipotent.
\end{Theorem}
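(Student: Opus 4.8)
The plan is to compute $h^{1,3}$ as the dimension of the Hodge-graded piece $\mathrm{Gr}_F^1 H^1(\mathbb{P}^1, j_*\mathbb{V})$, using Zucker's identification of this Hodge structure with the hypercohomology of the filtered logarithmic de Rham complex of the canonical extension. First I would recall that, by Griffiths transversality, the Hodge filtration $\overline{\mathscr{F}}^\bullet$ makes the log de Rham complex $[\overline{\mathscr{V}} \xrightarrow{\overline{\nabla}} \overline{\mathscr{V}}\otimes\Omega^1_{\mathbb{P}^1}(\log\Delta)]$ filtered, with associated graded in filtration degree $p$ the two-term complex
\begin{gather*}\mathrm{Gr}^p_F = \big[\mathscr{E}^{p,3-p}\xrightarrow{\theta}\mathscr{E}^{p-1,4-p}\otimes\Omega^1_{\mathbb{P}^1}(\log\Delta)\big].\end{gather*}
Since the parabolic cohomology is pure of weight $4$, the Hodge number $h^{1,3}$ equals $\dim\mathbb{H}^1$ of the $p=1$ complex $\big[\mathscr{E}^{1,2}\xrightarrow{\theta}\mathscr{E}^{0,3}\otimes\Omega^1_{\mathbb{P}^1}(\log\Delta)\big]$.

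Next I would exploit that for a $(1,1,1,1)$-type VHS every $\mathscr{E}^{p,q}$ is a line bundle and, as the variation is non-isotrivial, the Kodaira--Spencer map $\theta$ is a nonzero, hence injective, map of line bundles on $\mathbb{P}^1$ with torsion cokernel $Q$. The complex is then quasi-isomorphic to $Q$ placed in degree $1$, so $\mathbb{H}^1 = H^0(Q)$ has dimension $\deg\big(\mathscr{E}^{0,3}\otimes\Omega^1_{\mathbb{P}^1}(\log\Delta)\big) - \deg\mathscr{E}^{1,2} = \deg\mathscr{E}^{0,3} - \deg\mathscr{E}^{1,2} - 2 + |\Delta|$. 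To convert this into the stated $a$ and $b$, I would apply Lemma~\ref{lemma:conj}: writing each ordinary degree as the parabolic degree minus the sum of its parabolic weights, and using $\deg_{\mathrm{par}}\mathscr{E}^{q,p} = -\deg_{\mathrm{par}}\mathscr{E}^{p,q}$, gives $\deg\mathscr{E}^{0,3} = -a - \sum_p\big(\alpha_p^{(3,0)}+\alpha_p^{(0,3)}\big)$ and $\deg\mathscr{E}^{1,2} = -b - \sum_p\big(\alpha_p^{(2,1)}+\alpha_p^{(1,2)}\big)$. Since conjugate parabolic weights satisfy $\alpha^{(p,q)}+\alpha^{(q,p)}\in\{0,1\}$, vanishing exactly when the corresponding local monodromy eigenvalue is $1$, the boundary sum collapses to a count of punctures weighted by the indicators $\mathbf{1}\big(\alpha_p^{(2,1)}\neq 0\big)$ and $\mathbf{1}\big(\alpha_p^{(3,0)}\neq 0\big)$, yielding a per-puncture contribution $1 + \mathbf{1}\big(\alpha_p^{(2,1)}\neq 0\big) - \mathbf{1}\big(\alpha_p^{(3,0)}\neq 0\big)$.

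The hard part is the boundary bookkeeping, and in particular the fact that the naive canonical-extension computation above \emph{overcounts}: by Deligne's comparison theorem $\mathbb{H}^1$ of the log de Rham complex computes $\mathrm{Gr}_F^1 H^1(C,\mathbb{V})$, not of the interior cohomology $H^1(\mathbb{P}^1, j_*\mathbb{V})$, and the two differ by a correction that injects into the boundary coinvariants $\bigoplus_p \mathbb{V}_{T_p}$. I would therefore carry out a local analysis at each $p\in\Delta$ using the limiting mixed Hodge structure: the relative weight filtration of the nilpotent $N_p$ pins down how much of this coinvariant correction lands in the $(1,3)$-graded piece, and that is precisely the amount to be subtracted from the global degree count. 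The expectation, which this local computation must confirm, is that after combining the two effects each puncture contributes $1$ exactly when its monodromy is maximally unipotent (type $\mathrm{II}$), two-block unipotent (type $\mathrm{III}$), or strictly quasi-unipotent (type $\mathrm{IV}$), and $0$ otherwise (e.g.\ for trivial or single-Jordan-block unipotent monodromy, where the raw indicator count is cancelled by the coinvariant correction). Summing over $\Delta$ then gives
\begin{gather*}h^{1,3} = -2 + b - a + |\mathrm{II}| + |\mathrm{III}| + |\mathrm{IV}|.\end{gather*}
As an independent consistency check I would reconstruct the full weight-$4$ Euler characteristic using Proposition~\ref{prop:rank} for the total rank together with Theorem~\ref{theorem:msh} for $h^{0,4}=h^{4,0}$, which then determines $h^{2,2}$ and confirms the symmetry $h^{1,3}=h^{3,1}$.
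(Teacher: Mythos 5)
The first thing to note is that the paper contains no proof of Theorem~\ref{thm:mash}: it is imported verbatim from Hollborn--M\"uller-Stach, so the comparison is really with their argument, whose overall architecture your outline reproduces correctly. Your global bookkeeping is sound: identifying $h^{1,3}$ with $\dim\mathbb{H}^1$ of the graded piece $\bigl[\mathscr{E}^{1,2}\to\mathscr{E}^{0,3}\otimes\Omega^1_{\mathbb{P}^1}(\log\Delta)\bigr]$ up to a boundary correction, converting $\deg\mathscr{E}^{0,3}$ and $\deg\mathscr{E}^{1,2}$ into $-a$ and $-b$ plus parabolic weights via Lemma~\ref{lemma:conj}, and collapsing conjugate weights to indicators $\mathbf{1}(\lambda\neq 1)$ are all correct, granting standard facts you leave implicit ($E_1$-degeneration of the Hodge--de Rham spectral sequence for the logarithmic complex, strictness of $\mathrm{Gr}_F$ applied to $0\to H^1(\mathbb{P}^1,j_*\mathbb{V})\to H^1(C,\mathbb{V})\to\bigoplus_p\mathbb{V}_{T_p}$, injectivity of $\theta$, which needs irreducibility, and surjectivity onto the coinvariants, which needs $H^2(\mathbb{P}^1,j_*\mathbb{V})=0$).

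The genuine gap is that the decisive step is deferred rather than proved: you explicitly call the per-puncture identity ``the expectation, which this local computation must confirm.'' That identity \emph{is} the theorem; everything before it is formal. Concretely, one must verify, for every admissible local monodromy, that
\begin{gather*}
1 + \mathbf{1}\big(\lambda_p^{(2,1)}\neq 1\big) - \mathbf{1}\big(\lambda_p^{(3,0)}\neq 1\big) - \dim \mathrm{Gr}_F^0\big(V_{0,p}/N_p V_{0,p}\big) = \mathbf{1}\big(p\in \mathrm{II}\cup\mathrm{III}\cup\mathrm{IV}\big),
\end{gather*}
where $V_{0,p}$ is the unipotent part of the limit fibre with its limiting Hodge filtration. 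This is a genuine case analysis using the polarized limit mixed Hodge structure: for types $\mathrm{II}$ and $\mathrm{III}$ one needs $F^1_{\lim}+\mathrm{Im}\,N_p = V$; for the conifold type $[2,1,1]$ one needs $\mathrm{Im}\,N_p\subset F^1_{\lim}$ (its generator has type $(1,1)$), which is what makes that point contribute $0$ rather than $1$; and for strictly quasi-unipotent points one must run through the possible eigenvalue patterns. The last case is where the statement is delicate: for a pattern with eigenvalue $1$ on the $(2,1)/(1,2)$ pieces but $\neq 1$ on the $(3,0)/(0,3)$ pieces, the left-hand side above is $\leq 0$ while the formula demands $1$, so one must show such patterns cannot occur under the theorem's (implicit) irreducibility/polarization hypotheses --- they do occur for decomposable $(1,1,1,1)$-VHS's built from a unitary $(3,0){+}(0,3)$ summand. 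Ruling this out is exactly the limit-MHS classification that Hollborn--M\"uller-Stach carry out and that your outline omits, so as written the proposal is an accurate reduction of the theorem to its hardest step, not a proof of it.
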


\subsection{Base change and characteristic exponents}

The following instructive computation will be useful to us later. We want to understand how base change of a Fuchsian ODE af\/fects its Riemann scheme. More precisely, let us take a~Fuchsian ODE $L$ and a~map $g\colon \mathbb{P}^1 \rightarrow \mathbb{P}^1$. Pulling-back the solution sheaf of $L$ by $g$ yields a local sys\-tem~$g^*\mathrm{Sol}(L)$ and, in particular, another ODE $g^*L$, of the same rank as $L$, whose solution sheaf is~$g^*\mathrm{Sol}(L)$. Our goal in this section is to analyze the local exponents of $g^*L$. Away from the ramif\/ication points of $g$, the dif\/ferential equation $g^*L$ will have the same behaviour as $L$ at the corresponding point. So we can restrict ourselves to studying the local behaviour of $L$ at ramif\/ication points, and this can be done in the power series ring $\mathbb{C}[[t]]$. Therefore, our question reduces to a question of base change of a Fuchsian ODE along the map $s^k = t$, for a positive integer $k$.

The following computation appears for rank 2 ODEs in \cite{pfummmmm}.

\begin{Proposition}[{\cite[Lemma 3.21]{pfummmmm}}]
Base change of order $k$ at a point $p \in \mathbb{P}^1$ affects the characteristic exponents of $L$ by multiplying everything by $k$.
\end{Proposition}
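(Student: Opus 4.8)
The plan is to reduce the statement to a short computation with the indicial equation, using the $\delta$-operator form \eqref{eq:delta} of $L$. As explained in the paragraph preceding the proposition, it suffices to treat the local model at $p=0$ and the base change $t = s^k$, working in $\mathbb{C}[[t]]$. First I would write $L$ in the form $\delta_t^n + \sum_{i=1}^n g_i(t)\delta_t^{n-i}$ with each $g_i$ holomorphic at $0$, so that $0$ is a regular singular point and the characteristic exponents $\mu_1,\dots,\mu_n$ of $L$ at $0$ are precisely the roots of the indicial equation $T^n + \sum_{i=1}^n g_i(0)T^{n-i}$.

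The crux is the identity $\delta_s = k\,\delta_t$. With $t = s^k$, the chain rule gives $\frac{d}{ds} = k s^{k-1}\frac{d}{dt}$, hence $\delta_s = s\frac{d}{ds} = k s^k\frac{d}{dt} = k t\frac{d}{dt} = k\,\delta_t$. Substituting $\delta_t = \frac{1}{k}\delta_s$ into the $\delta_t$-form of $L$ and multiplying the resulting operator by the nonzero constant $k^n$ (which changes neither the solution sheaf nor the characteristic exponents) yields the monic operator $\delta_s^n + \sum_{i=1}^n k^i g_i(s^k)\,\delta_s^{n-i}$. Since $g_i(s^k)$ is holomorphic at $s=0$, this is again Fuchsian with a regular singular point at $s=0$; moreover its solution sheaf is spanned by the pullbacks $f(s^k)$ of solutions $f$ of $L$, so it is a scalar multiple of, and hence equal in normalized form to, $g^*L$.

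It then remains to compare indicial equations. The indicial equation of $g^*L$ at $s=0$ is $T^n + \sum_{i=1}^n k^i g_i(0)\,T^{n-i}$; substituting $T = kU$ and extracting the common factor $k^n$ produces $k^n\bigl(U^n + \sum_{i=1}^n g_i(0)\,U^{n-i}\bigr)$. Thus $T$ is a root of the new indicial equation exactly when $U = T/k$ is a root of the old one, so the characteristic exponents of $g^*L$ at $s=0$ are precisely $k\mu_1,\dots,k\mu_n$, as claimed.

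Because the calculation is so short, I do not expect a serious obstacle; the only points that will require a sentence of care are the identification of the substituted operator with $g^*L$ — that passing to the minimal operator annihilating the pulled-back solutions agrees with naive substitution, which follows from both being Fuchsian of the same rank with the same local solution space — and the observation that restoring monic form by multiplying through by the constant $k^n$ leaves the exponents untouched. One may, if desired, double-check the whole argument against the solution-level heuristic, namely that a local solution $t^{\mu_i}P_i(t)$ pulls back to $s^{k\mu_i}P_i(s^k)$, which visibly carries exponent $k\mu_i$; this gives an independent confirmation that is robust even in the presence of logarithmic terms.
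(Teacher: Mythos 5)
Your proposal is correct and follows essentially the same route as the paper: both hinge on the chain-rule identity $\delta_s = k\,\delta_t$ under $t=s^k$, producing the operator $\delta_s^n + \sum_{i=1}^n k^i g_i\bigl(s^k\bigr)\delta_s^{n-i}$, and then compare indicial equations to conclude that the exponents scale by $k$. Your added remarks on monic normalization and on identifying the substituted operator with $g^*L$ are fine but inessential refinements of the same argument.
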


\begin{proof}
Assume that $w(t)$ is a local solution to $Lw = 0$. By the chain rule, we have that
\begin{gather*}\frac{dw}{ds}\big(s^k\big) = ks^{k-1}\frac{dw}{dt}\big(s^k\big),\end{gather*}
therefore
\begin{gather*}\delta_t w\big(s^k\big) = s^k \frac{dw}{dt}\big(s^k\big) = \frac{s}{k} \frac{dw}{ds}\big(s^k\big) = \frac{\delta_s}{k}w\big(s^k\big).\end{gather*}
Thus if $w$ is a solution to the dif\/ferential equation
\begin{gather*}\delta_t^n+ \sum_{i=1}^{n}f_i(t) \delta_t^{n-i},\end{gather*}
then $w(s^k)$ solves
\begin{gather*}{\delta_s^n} + \sum_{i=1}^n k^i f_i\big(s^k\big) {\delta_s^{n-i}}.\end{gather*}
Suppose that the indicial equation of $L$ at $0$ is given by
\begin{gather*}\prod_{i=1}^n (T - \mu_i),\end{gather*}
for $\mu_i$ the local exponents of $L$ at $0$. Then the indicial equation of the dif\/ferential equation in terms of~$s$ is written as
\begin{gather*}\prod_{i=1}^n (T - k\mu_i).\end{gather*}
This proves the proposition.
\end{proof}

\begin{Corollary}
After base change, the ramification points of $g\colon\mathbb{P}^1 \to \mathbb{P}^1$ become apparent singularities of $g^*L$. If $p$ is a point with characteristic exponents all $0$, then for any point $q \in g^{-1}(p)$, the characteristic exponents of $g^*L$ at $q$ are all $0$.
\end{Corollary}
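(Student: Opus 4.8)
The plan is to obtain both assertions as direct consequences of the Proposition, which records that base change of order $k$ scales every characteristic exponent by $k$. The only additional input needed is the local picture: near a ramification point $q$ of index $k$ the map $g$ is, in suitable coordinates, given by $t = s^k$, so the behaviour of $g^*L$ at $q$ is precisely the base change of $L$ at $p = g(q)$ studied in the Proposition.

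First I would handle the ramification points. Take $q$ a ramification point of index $k \geq 2$ lying over a nonsingular point $p = g(q)$ of $L$, this being the case in which a new singularity can appear. At $p$ the characteristic exponents are consecutive integers $(j, j+1, \dots, j+n-1)$, so by the Proposition the exponents of $g^*L$ at $q$ are $(kj, kj+k, \dots, kj + k(n-1))$. These are integers but, for $k \geq 2$, fail to be consecutive, so $q$ is a genuine regular singular point of $g^*L$ rather than a nonsingular one. To see that it is apparent, I would argue at the level of monodromy: a single loop around $q$ pulls back, through $t = s^k$, to $k$ loops around $p$, so the local monodromy of $g^*\mathrm{Sol}(L)$ at $q$ is the $k$-th power of that of $\mathrm{Sol}(L)$ at $p$. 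As $p$ is nonsingular this monodromy is the identity, hence so is its $k$-th power, and $q$ is therefore an apparent singularity.

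The second assertion is then immediate. If all characteristic exponents of $L$ at $p$ vanish and $q \in g^{-1}(p)$ has ramification index $k$ (with $k=1$ when $g$ is unramified at $q$), then by the Proposition the exponents of $g^*L$ at $q$ are each multiplied by $k$, giving $k \cdot 0 = 0$; thus all characteristic exponents at $q$ are $0$. The step I expect to require the most care is the passage from the exponent computation to apparentness in the first assertion: the Proposition yields integer, hence unipotent, local behaviour, but triviality of the monodromy — which is what apparentness demands — does not follow from the exponents alone. This is precisely why I would route that step through the $k$-th-power description of the local monodromy rather than through the Riemann scheme.
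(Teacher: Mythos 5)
Your proposal is correct and follows essentially the same route as the paper, which states this Corollary as an immediate consequence of the Proposition: the exponent-scaling argument identifies the ramification points over nonsingular points as singular (non-consecutive integer exponents), and the observation that a loop around $q$ pulls back to the $k$-th power of the trivial local monodromy at $p$ gives apparentness, while the second claim is just $k \cdot 0 = 0$. Your explicit care in separating integrality of exponents from triviality of monodromy is exactly the point the paper leaves implicit.
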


Combining this with Theorem \ref{thm:ekmz}, we can say quite a bit about how the Hodge numbers of a variation of Hodge structure of type $(1,1,\dots, 1)$ are altered by base change.

\begin{Remark}
We will take this opportunity to emphasize that performing base change on a~variation of Hodge structure and base change on a linear dif\/ferential equation may not in fact be compatible. In other words, there is a natural variation of Hodge structure on~$\mathbb{P}^1$ underlying~$g^*\mathbb{V}$ with appropriate canonical extensions at the points in $g^{-1}(\Delta)$. The Hodge bundles $\mathscr{E}^{p,q}_g$ of this variation of Hodge structure may not agree with the pull-backs $g^*\mathscr{E}^{p,q}$ of the Hodge bundles associated to $\mathbb{V}$. However, the monodromy representation and period maps are correct, so if $L_g$ is the Picard--Fuchs system of the pulled back variation of Hodge structure, then $g^*L = \prescript{h}{}L_g$ for some meromorphic function $h$. Thus the dif\/ferences between the characteristic exponents of~$g^*L$ are the same as for $L_g$, so we can apply Theorem~\ref{thm:ekmz} to compute the degrees of~$\mathscr{E}^{p,q}_g$.

The underlying reason behind this incompatibility is that Deligne's canonical extension does not commute with base change. Indeed, Deligne's canonical extension of $g^*\mathbb{V}$ chooses a branch of the logarithm so that the eigenvalues of the maps $N_{\alpha}$ at each point of $g^{-1}(\Delta)$ lie in the interval~$[0,1)$ (see Section~\ref{sec:VHS}). However base change acts additively on these eigenvalues, so may take them out of this interval; the pulled-back variation of Hodge structure may thus correspond to a dif\/ferent choice of extension of~$g^*\mathbb{V}$.
\end{Remark}

\section{Families of elliptic curves} \label{sect:Ell}
In the remainder of this paper we will study some applications of these results in various settings. We begin with the case of a family of elliptic curves.

Let $L$ be a rank 2 ODE which underlies a real variation of Hodge structure of weight $1$. We will show that if $L$ is the Picard--Fuchs equation of a family of elliptic curves $S$, then we can determine the geometry of $S$ from $L$.

First, we will restate Theorem \ref{thm:ekmz} in the case of a $(1,1)$-VHS.

\begin{Corollary}\label{thm:ell}
Let $L$ be a rank $2$ Fuchsian ODE which underlies an $\mathbb{R}$-VHS of weight $1$, with regular singular points at $\Delta$ in $\mathbb{P}^1$. Then the map $\theta\colon \mathscr{E}^{1,0} \rightarrow \mathscr{E}^{0,1} \otimes \Omega_{\mathbb{P}^1}(\Delta)$ has the following structure.
\begin{itemize}\itemsep=0pt
\item[$1)$] if $p$ is a nonsingular point of $L$, then $\theta$ is an isomorphism at $p$,
\item[$2)$] if $L$ has an apparent singularity at $p$, then $\theta$ has cokernel of length $\mu^p_2- \mu^p_1 - 1$ at $p$,
\item[$3)$] if $p \in \Delta$, then $\theta$ has cokernel of length $\left \lfloor{\mu^p_2}\right \rfloor - \left \lfloor{\mu^p_1}\right \rfloor$.
\end{itemize}
\end{Corollary}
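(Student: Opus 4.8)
The plan is to recognize that this Corollary is nothing more than the specialization of Theorem \ref{thm:ekmz} to the weight-one situation, so that essentially all of the work lies in verifying that the hypotheses of that theorem are met and that the case indices line up correctly. Rather than reproving the local statement from scratch, I would simply invoke Theorem \ref{thm:ekmz} with the appropriate values of $\ell$ and $i$.

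First I would check that the hypotheses of Theorem \ref{thm:ekmz} are satisfied. Any rank-two $\mathbb{R}$-VHS of weight $1$ is automatically of $(1,1)$-type: the nonzero Hodge numbers $h^{p,q}$ have $p+q=1$ and must sum to the rank $2$, which forces $h^{1,0}=h^{0,1}=1$. Hence $L$ underlies a $(1,1,\dots,1)$-type VHS (with just two entries) over $\mathbb{P}^1\setminus\Delta$, and the length of the Hodge filtration is $\ell=1$, so that its graded pieces are exactly $\mathscr{E}^{1,0}$ and $\mathscr{E}^{0,1}$.

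Next I would pin down the relevant Kodaira--Spencer map. With $\ell=1$, the constraint $i<\ell/2=1/2$ in Theorem \ref{thm:ekmz} leaves only the index $i=0$, and the map $\theta_i\colon \mathscr{E}^{\ell-i,i}\to\mathscr{E}^{\ell-1-i,1+i}\otimes\Omega_{\mathbb{P}^1}(\Delta)$ specializes to $\theta\colon \mathscr{E}^{1,0}\to\mathscr{E}^{0,1}\otimes\Omega_{\mathbb{P}^1}(\Delta)$, precisely the map appearing in the statement. I would then substitute $i=0$ into the three cases of Theorem \ref{thm:ekmz}: a nonsingular point gives that $\theta$ is a local isomorphism, which is case $(1)$; an apparent singularity gives a cokernel of length $\mu^p_{i+2}-\mu^p_{i+1}-1=\mu^p_2-\mu^p_1-1$, which is case $(2)$; and an actual singularity $p\in\Delta$ gives a cokernel of length $\lfloor\mu^p_{i+2}\rfloor-\lfloor\mu^p_{i+1}\rfloor=\lfloor\mu^p_2\rfloor-\lfloor\mu^p_1\rfloor$, which is case $(3)$.

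The only point requiring any genuine care---the sole ``obstacle,'' such as it is---is the bookkeeping: confirming that weight-one rank-two forces $(1,1)$-type and that $\ell=1$ makes $i=0$ the unique admissible index, after which the three assertions fall out by direct substitution. As an alternative, one could give a self-contained local proof by constructing frames for $\mathscr{E}^{1,0}$ and $\mathscr{E}^{0,1}$ out of the characteristic exponents via the Frobenius method (as indicated in the Remarks preceding Section \ref{sec:impres}) and computing the vanishing order of $\theta$ at each point directly; but since Theorem \ref{thm:ekmz} is already available, specializing it is by far the cleaner route.
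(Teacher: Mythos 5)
Your proposal is correct and matches the paper's approach exactly: the paper presents this Corollary as nothing more than a restatement of Theorem \ref{thm:ekmz} specialized to a $(1,1)$-VHS, with no further argument given. Your verification that rank two and weight one force $(1,1)$-type, that $\ell=1$ makes $i=0$ the only admissible index, and that the three cases then follow by substitution is precisely the (implicit) content of the paper's specialization.
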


The following theorem follows easily.

\begin{Theorem}\label{thm:elldeg}
Suppose that $L$ underlies a family of elliptic curves over $\mathbb{P}^1$. Let $a_L$ denote the number of points in $\Delta$ at which the local monodromy of $L$ is strictly quasi-unipotent, and let $\Delta_a$ denote the set of points at which $L$ has an apparent singularity. Then
\begin{gather*}\deg \mathscr{E}^{0,1} = \frac{1}{2} \bigg( 2 -a_L + \sum_{p \in \Delta}\big( \lfloor \mu_2^p\rfloor - \lfloor \mu_1^p \rfloor - 1\big) + \sum_{p \in \Delta_a}\big( \mu_2^p - \mu_1^p - 1\big)\bigg).\end{gather*}
\end{Theorem}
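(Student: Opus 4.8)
The plan is to produce two independent linear relations between $\deg\mathscr{E}^{1,0}$ and $\deg\mathscr{E}^{0,1}$ and then solve the resulting $2\times 2$ system for $\deg\mathscr{E}^{0,1}$. Since the variation has weight $1$ and type $(1,1)$, both $\mathscr{E}^{1,0}$ and $\mathscr{E}^{0,1}$ are line bundles on $\mathbb{P}^1$, and the Kodaira--Spencer map $\theta\colon\mathscr{E}^{1,0}\to\mathscr{E}^{0,1}\otimes\Omega_{\mathbb{P}^1}(\Delta)$ of Corollary~\ref{thm:ell} is a morphism of line bundles. Because the family of elliptic curves is non-isotrivial its period map is nonconstant, so $\theta$ is not identically zero; on the integral curve $\mathbb{P}^1$ a nonzero morphism of line bundles is injective as a map of sheaves, with torsion (skyscraper) cokernel.

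For the first relation I would view $\theta$ as a nonzero global section of $\bigl(\mathscr{E}^{1,0}\bigr)^\vee\otimes\mathscr{E}^{0,1}\otimes\Omega_{\mathbb{P}^1}(\Delta)$, so that the total length of its cokernel equals the degree of this line bundle,
\[
\deg\mathscr{E}^{0,1}-\deg\mathscr{E}^{1,0}+\deg\Omega_{\mathbb{P}^1}(\Delta),
\qquad \deg\Omega_{\mathbb{P}^1}(\Delta)=-2+|\Delta|.
\]
The same total length is computed by summing the local contributions from Corollary~\ref{thm:ell}: nonsingular points contribute nothing, each apparent singularity $p\in\Delta_a$ contributes $\mu_2^p-\mu_1^p-1$, and each $p\in\Delta$ contributes $\lfloor\mu_2^p\rfloor-\lfloor\mu_1^p\rfloor$. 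Equating the two expressions and absorbing $|\Delta|=\sum_{p\in\Delta}1$ into the $\Delta$-sum gives
\[
\deg\mathscr{E}^{0,1}-\deg\mathscr{E}^{1,0}=2+\sum_{p\in\Delta}\bigl(\lfloor\mu_2^p\rfloor-\lfloor\mu_1^p\rfloor-1\bigr)+\sum_{p\in\Delta_a}\bigl(\mu_2^p-\mu_1^p-1\bigr).
\]

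For the second relation I would invoke Lemma~\ref{lemma:conj}, which gives $\deg_{\mathrm{par}}\mathscr{E}^{1,0}+\deg_{\mathrm{par}}\mathscr{E}^{0,1}=0$. Expanding each parabolic degree via~\eqref{eq:pardeg}, and writing the parabolic weight of $\mathscr{E}^{p,q}$ at a point of $\Delta$ (the $\alpha$ with $\mathscr{E}^{p,q}_\alpha=\mathscr{E}^{p,q}$) read off from Table~\ref{tab:elliptic}, this becomes
\[
\deg\mathscr{E}^{1,0}+\deg\mathscr{E}^{0,1}+\sum_{p\in\Delta}\bigl(\alpha_p^{1,0}+\alpha_p^{0,1}\bigr)=0.
\]
Inspecting Table~\ref{tab:elliptic}, every strictly quasi-unipotent fibre has $\alpha_p^{1,0}+\alpha_p^{0,1}=1$ while every unipotent fibre $\mathrm{I}_n$ has $\alpha_p^{1,0}=\alpha_p^{0,1}=0$, so the sum equals $a_L$ and hence $\deg\mathscr{E}^{1,0}+\deg\mathscr{E}^{0,1}=-a_L$. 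Adding this to the first relation eliminates $\deg\mathscr{E}^{1,0}$ and yields $2\deg\mathscr{E}^{0,1}$ equal to the claimed right-hand side; dividing by $2$ gives the formula.

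The step needing genuine care is the first one: I must justify that $\theta\not\equiv 0$ (so that the cokernel is torsion and its total length really equals the degree of the Hom-bundle) and that the local lengths from Corollary~\ref{thm:ell} account for \emph{all} of the vanishing of $\theta$ — in particular that $\theta$ is an isomorphism at every point of $C=\mathbb{P}^1\setminus\Delta$ other than the apparent singularities $\Delta_a$, and that there is no additional drop in rank hidden at points of $\Delta$ beyond the floor-difference contribution. Once this local-to-global degree count is secured, the remainder is bookkeeping with $\deg\Omega_{\mathbb{P}^1}(\Delta)$ and the parabolic weights of Table~\ref{tab:elliptic}.
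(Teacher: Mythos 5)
Your proof is correct and follows essentially the same route as the paper: the paper also combines the degree count for the cokernel of $\theta$ from Corollary~\ref{thm:ell} (using $\deg(\mathscr{E}^{0,1}\otimes\Omega_{\mathbb{P}^1}(\Delta)) = \deg\mathscr{E}^{0,1}+|\Delta|-2$) with the relation $\deg\mathscr{E}^{1,0}+\deg\mathscr{E}^{0,1}=-a_L$, which it cites from Example~\ref{ex:Elc} and which is exactly your computation via Lemma~\ref{lemma:conj}, equation~\eqref{eq:pardeg} and Table~\ref{tab:elliptic}. Your concern about $\theta\not\equiv 0$ is already settled by part~1 of Corollary~\ref{thm:ell}, since $\theta$ being an isomorphism at every nonsingular point makes it generically nonzero with torsion cokernel.
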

\begin{Remark} \label{rem:ell} If we note that the characteristic exponents $\mu_i^p$ are integers when~$p$ is nonsingular or an apparent singularity, and that $\mu_2^p = \mu_1^p + 1$ when $p$ is a non-singular point, then we can simplify the formula in Theorem~\ref{thm:elldeg} to
\begin{gather*}\deg \mathscr{E}^{0,1} = \frac{1}{2} \bigg( 2 -a_L + \sum_{p \in \mathbb{P}^1}\big( \lfloor \mu_2^p\rfloor - \lfloor \mu_1^p \rfloor - 1\big)\bigg).\end{gather*}
However, the form given in Theorem \ref{thm:elldeg} is simpler to work with from a computational standpoint, as both sums are clearly f\/inite.
\end{Remark}
\begin{proof}[Proof of Theorem \ref{thm:elldeg}]
By Example \ref{ex:Elc}, we see that
\begin{gather*}\deg \mathscr{E}^{1,0} + \deg \mathscr{E}^{0,1} = -a_L,\end{gather*}
and Corollary \ref{thm:ell} gives that the map $\theta$ has cokernel of length
\begin{gather*}\sum_{p \in \Delta_a} \big(\mu_2^p - \mu_1^p - 1\big) + \sum_{p \in \Delta} \big(\lfloor \mu^p_2 \rfloor - \lfloor \mu^p_1 \rfloor\big).\end{gather*}
Since $\deg (\mathscr{E}^{0,1} \otimes \Omega_{\mathbb{P}^1}(\Delta)) = |\Delta| -2 +\deg{\mathscr{E}^{0,1}}$, it follows that
\begin{gather*}\deg \mathscr{E}^{0,1} + |\Delta| - 2 = -\deg \mathscr{E}^{0,1} -a_L + \sum_{p \in \Delta_a} \big(\mu_2^p - \mu_1^p - 1\big) + \sum_{p \in \Delta} \big(\lfloor \mu^p_2 \rfloor - \lfloor \mu_1^p \rfloor\big).\end{gather*}
Thus
\begin{gather*}\deg \mathscr{E}^{0,1} = \frac{1}{2} \bigg(2 -a_L + \sum_{p \in \Delta_a} \big(\mu_2^p - \mu_1^p - 1\big) + \sum_{p \in \Delta} \big(\lfloor \mu^p_2 \rfloor - \lfloor \mu_1^p \rfloor-1\big)\bigg)\end{gather*}
as required.
\end{proof}

The term $(\mu_2^p - \mu_1^p-1)$ should be thought of as an invariant which counts the ramif\/ication of the $j$-function at $p \in \Delta_a$; Doran makes this precise in~\cite{pfummmmm}.

\begin{Remark}From this, we may use Theorem~\ref{theorem:msh} to compute $h^{0,2}$ for the parabolic cohomology of our elliptic f\/ibration. In~\cite{inses}, Cox and Zucker show that this~$h^{0,2}$ computes the dimension of a particular space of modular forms. The computations in this section can be thought of as giving a way to compute the dimension of this space of modular forms using the Picard--Fuchs equation of the elliptic surface.
\end{Remark}

\section{Families of K3 surfaces} \label{sect:K3}

Next we turn our attention to families of K3 surfaces, as studied in Example~\ref{ex:K3}. Theorem~\ref{thm:ekmz} can be rephrased in the context of the corresponding $(1,1,1)$-variations of Hodge structure, as follows. Recall that in this case we have the Kodaira--Spencer maps $\theta_0\colon \mathscr{E}^{2,0} \rightarrow \mathscr{E}^{1,1} \otimes \Omega_{\mathbb{P}^1}(\Delta)$ and $\theta_1\colon \mathscr{E}^{1,1} \rightarrow \mathscr{E}^{0,2} \otimes \Omega_{\mathbb{P}^1}(\Delta)$.

\begin{Corollary}\label{k3cor}
If $L$ is a Picard Fuchs equation corresponding to a $(1,1,1)$-type variation of Hodge structure, then
\begin{itemize}\itemsep=0pt
\item[$1)$] if $p$ is a nonsingular point of $L$, then $\theta_0$ is an isomorphism,
\item[$2)$] if $p$ is an apparent singularity of $L$, then $\theta_0$ has cokernel of length $\mu_2^p - \mu_1^p - 1$ at $p$,
\item[$3)$] if $p$ is a regular singular point of $L$, then $\theta_0$ has cokernel of length $\lfloor \mu_2^p \rfloor - \lfloor \mu_1^p \rfloor$.
\end{itemize}
\end{Corollary}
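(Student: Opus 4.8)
The plan is to derive this corollary directly from Theorem~\ref{thm:ekmz} by specializing to the weight~$2$ setting. A $(1,1,1)$-type variation of Hodge structure has weight $n = 2$, so its Hodge filtration $\overline{\mathscr{F}}^2 \supset \overline{\mathscr{F}}^1 \supset \overline{\mathscr{F}}^0$ has length $\ell = 2$, with graded pieces $\mathscr{E}^{2,0}$, $\mathscr{E}^{1,1}$ and $\mathscr{E}^{0,2}$. First I would match the map under study with the general Kodaira--Spencer map of Theorem~\ref{thm:ekmz}: there $\theta_i \colon \mathscr{E}^{\ell-i,i} \to \mathscr{E}^{\ell-1-i,1+i} \otimes \Omega_{\mathbb{P}^1}(\Delta)$, and setting $\ell = 2$, $i = 0$ yields exactly the map $\theta_0 \colon \mathscr{E}^{2,0} \to \mathscr{E}^{1,1} \otimes \Omega_{\mathbb{P}^1}(\Delta)$ appearing in the corollary. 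Note also that the hypothesis of Theorem~\ref{thm:ekmz} that $\sigma$ be a section of $\mathscr{E}^{\ell,0} = \mathscr{E}^{2,0}$ is compatible with the setup, since $L$ is by assumption the Picard--Fuchs equation of the $(1,1,1)$-VHS.

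Next I would confirm that Theorem~\ref{thm:ekmz} is applicable here, which requires $i < \ell/2$; with $i = 0$ and $\ell = 2$ this is the inequality $0 < 1$, so the theorem applies verbatim to $\theta_0$. I would then substitute $i = 0$ into the three conclusions of Theorem~\ref{thm:ekmz}, which turns $\mu_{i+1}^p$ into $\mu_1^p$ and $\mu_{i+2}^p$ into $\mu_2^p$. Case~1 (nonsingular $p$) gives that $\theta_0$ is a local isomorphism; case~2 (apparent singularity) gives cokernel of length $\mu_2^p - \mu_1^p - 1$; and case~3 (actual singularity) gives cokernel of length $\lfloor \mu_2^p \rfloor - \lfloor \mu_1^p \rfloor$. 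These are precisely the three bullets of the corollary, with the understanding that the ``regular singular point'' of the third bullet refers to an actual (non-apparent) singularity. This reading is in fact forced: at an apparent singularity all characteristic exponents are integers, so the formula $\lfloor \mu_2^p \rfloor - \lfloor \mu_1^p \rfloor$ would disagree with the apparent-singularity formula of case~2 by exactly~$1$, and hence the two cases cannot overlap.

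Since the result is a verbatim specialization, I do not expect any genuine mathematical obstacle; the only point requiring care is the bookkeeping, namely correctly identifying $\ell = 2$ for a $(1,1,1)$-VHS, checking the applicability condition $i < \ell/2$, and tracking that the single relevant map is $\theta_0$ at $i = 0$. Everything else is inherited directly from Theorem~\ref{thm:ekmz}.
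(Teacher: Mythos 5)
Your proposal is correct and matches the paper's approach exactly: the paper presents Corollary~\ref{k3cor} as a direct rephrasing of Theorem~\ref{thm:ekmz} for a weight-$2$, $(1,1,1)$-type VHS (so $\ell = 2$, $i = 0$, and the condition $i < \ell/2$ holds), with no further argument needed. Your additional observation that the ``regular singular point'' in item~3 must be read as an \emph{actual} singularity is a sensible clarification consistent with the paper's usage (compare item~3 of Corollary~\ref{thm:ell}, which writes $p \in \Delta$).
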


Let $U = \mathbb{P}^1 \setminus \Delta$ denote a Zariski open set and let $j\colon U \hookrightarrow \mathbb{P}^1$ denote the embedding. Suppose that $\mathcal{X} \to U$ is an $M$-polarized family of K3 surfaces, in the sense of \cite[Def\/inition 2.1]{flpk3sm}, for some rank $19$ lattice $M$. The transcendental lattices of the f\/ibres of $\mathcal{X}$ def\/ine a VHS $\mathbb{V}$ of type $(1,1,1)$ over $U$; we assume that all of the monodromy matrices of this VHS take the forms $T_x$ considered in Example~\ref{ex:K3}. Finally, let~$L$ denote the Picard--Fuchs equation underlying this $(1,1,1)$-VHS.

By Lemma \ref{lemma:conj}, it is easy to see that $\deg_\mathrm{par} \mathscr{E}^{1,1} = 0$, so the degree of $\mathscr{E}^{1,1}$ can be computed easily using equation \eqref{eq:pardeg}: indeed, if $a_{1/2}$ is the number of points $p$ at which monodromy is of type $T_\mathrm{un}$, $T_i$, $T_\omega$ or $T_\mathrm{nod}$ (see Example \ref{ex:K3}), then $\deg \mathscr{E}^{1,1} = -\frac{1}{2}a_{1/2}$. We let $a_{f}$ denote the number of points whose monodromy matrices look like $T_\mathrm{un}$, $T_i$, $-T_i$, $T_i^2$, $T_\omega$ and $T_\omega^2$. Finally, let~$\Delta_a$ denote the set of points at which $L$ has an apparent singularity.

\begin{Proposition}
If $L$ underlies a family of K3 surfaces $\mathcal{X} \to U$ as above, then we have
\begin{gather*}\deg\big(\mathscr{E}^{0,2}\big) = 2 + \frac{1}{2} a_{1/2} - a_f + \sum_{p \in \Delta_a} \big(\mu_2^p - \mu_1^p - 1\big) + \sum_{p \in \Delta} \big(\lfloor \mu^p_2 \rfloor - \lfloor \mu_1^p \rfloor-1\big).\end{gather*}
\end{Proposition}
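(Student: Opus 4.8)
The plan is to imitate the proof of Theorem~\ref{thm:elldeg} almost verbatim, replacing the single Kodaira--Spencer map of the elliptic case by $\theta_0$ and feeding in the already-computed degree of the middle bundle $\mathscr{E}^{1,1}$. Note that $\theta_1$ need play no role: once $\deg\mathscr{E}^{1,1}$ is known, the relation coming from $\theta_0$ together with Lemma~\ref{lemma:conj} already pins down $\deg\mathscr{E}^{0,2}$, since for a weight-$2$ VHS the hypothesis $i<\ell/2$ of Theorem~\ref{thm:ekmz} holds precisely for $i=0$.

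First I would record the two inputs supplied by conjugation symmetry. The discussion preceding the statement gives $\deg\mathscr{E}^{1,1}=-\tfrac12 a_{1/2}$, obtained by applying Lemma~\ref{lemma:conj} to the self-conjugate bundle $\mathscr{E}^{1,1}$ (so that $\deg_{\mathrm{par}}\mathscr{E}^{1,1}=0$) together with equation~\eqref{eq:pardeg} and the $\mathscr{E}^{1,1}$ column of Table~\ref{tab:111}. Applying the same argument to the conjugate pair $\mathscr{E}^{2,0},\mathscr{E}^{0,2}$ yields
\begin{gather*}\deg\mathscr{E}^{2,0}+\deg\mathscr{E}^{0,2}=-a_f,\end{gather*}
where one reads off from Table~\ref{tab:111} that the combined parabolic shift at a point $p$ equals $1$ exactly when the monodromy is of type $T_{\mathrm{un}}$, $T_i$, $-T_i$, $T_i^2$, $T_\omega$ or $T_\omega^2$, and $0$ for $T_{\mathrm{un}}^2$ and $T_{\mathrm{nod}}$; the number of points of the former kind is $a_f$ by definition.

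Next I would extract $\deg\mathscr{E}^{2,0}$ from $\theta_0$. Since $\mathcal{X}\to U$ varies nontrivially, $\theta_0$ is not identically zero (indeed it is an isomorphism at every nonsingular point by Corollary~\ref{k3cor}), so as a nonzero map of line bundles on $\mathbb{P}^1$ its cokernel has length $\deg(\mathscr{E}^{1,1}\otimes\Omega_{\mathbb{P}^1}(\Delta))-\deg\mathscr{E}^{2,0}$. On the other hand Corollary~\ref{k3cor} computes that length locally as
\begin{gather*}\sum_{p\in\Delta_a}\big(\mu_2^p-\mu_1^p-1\big)+\sum_{p\in\Delta}\big(\lfloor\mu_2^p\rfloor-\lfloor\mu_1^p\rfloor\big).\end{gather*}
Equating the two expressions, using $\deg(\mathscr{E}^{1,1}\otimes\Omega_{\mathbb{P}^1}(\Delta))=\deg\mathscr{E}^{1,1}+|\Delta|-2$, and solving for $\deg\mathscr{E}^{2,0}$, I then substitute into $\deg\mathscr{E}^{0,2}=-a_f-\deg\mathscr{E}^{2,0}$, insert $\deg\mathscr{E}^{1,1}=-\tfrac12 a_{1/2}$, and finally absorb the $-|\Delta|$ term into the sum over $\Delta$ via $\sum_{p\in\Delta}(\lfloor\mu_2^p\rfloor-\lfloor\mu_1^p\rfloor)-|\Delta|=\sum_{p\in\Delta}(\lfloor\mu_2^p\rfloor-\lfloor\mu_1^p\rfloor-1)$, producing the stated formula.

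The mechanics are entirely parallel to Theorem~\ref{thm:elldeg}; the only genuinely new bookkeeping, and the step I would verify most carefully, is the tabulation from Table~\ref{tab:111} giving the combined shift $\in\{0,1\}$ that yields the clean value $-a_f$ for $\deg\mathscr{E}^{2,0}+\deg\mathscr{E}^{0,2}$, together with checking that the definitions of $a_{1/2}$ and $a_f$ are consistent with the relevant columns. Everything else is the standard fact that a nonzero morphism of line bundles on $\mathbb{P}^1$ has cokernel of length equal to the difference of degrees, applied to $\theta_0$.
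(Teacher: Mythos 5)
Your proof is correct and takes essentially the same approach as the paper: the paper's own proof is just the one-line remark that the result ``is proved by a similar calculation to Theorem~\ref{thm:elldeg}'', and your argument is precisely that calculation carried out in detail. In particular, your three ingredients---$\deg\mathscr{E}^{1,1}=-\tfrac{1}{2}a_{1/2}$, the relation $\deg\mathscr{E}^{2,0}+\deg\mathscr{E}^{0,2}=-a_f$ obtained from Lemma~\ref{lemma:conj}, equation~\eqref{eq:pardeg} and Table~\ref{tab:111}, and the cokernel count for $\theta_0$ from Corollary~\ref{k3cor} (noting that only $i=0$ satisfies $i<\ell/2$, so $\theta_1$ plays no role)---are exactly what the intended adaptation of the elliptic-curve proof requires, and your bookkeeping of the parabolic shifts matches the paper's definitions of $a_{1/2}$ and $a_f$.
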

\begin{proof}This is proved by a similar calculation to Theorem~\ref{thm:elldeg}.\end{proof}

\begin{Remark} As in the case of elliptic curves, we may simplify this formula to the shorter, but less transparent, form
\begin{gather*}\deg\big(\mathscr{E}^{0,2}\big) = 2 + \frac{1}{2} a_{1/2} - a_f + \sum_{p \in \mathbb{P}^1} \big(\lfloor \mu^p_2 \rfloor - \lfloor \mu_1^p \rfloor-1\big).\end{gather*}
\end{Remark}

Using this, one can easily use Theorem \ref{theorem:msh} to compute $h^{0,3}$ of the Hodge structure on $H^1(\mathbb{P}^1,j_*\mathbb{V})$.

\begin{Corollary}\label{cor:computeK3}
The Hodge number $h^{0,3}$ of the Hodge structure on $H^1(\mathbb{P}^1,j_*\mathbb{V})$ is
\begin{gather*}h^{0,3} = h^0\bigg(\mathbb{P}^1, \mathscr{O}_\mathbb{P}^1\bigg( a_f -4- \frac{1}{2}a_{1/2} - \sum_{p \in \Delta_a} \big(\mu_2^p - \mu_1^p - 1\big) - \sum_{p \in \Delta} \big(\lfloor \mu^p_2 \rfloor - \lfloor \mu_1^p \rfloor-1\big)\bigg)\bigg).\end{gather*}
\end{Corollary}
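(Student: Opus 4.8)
The plan is to obtain this as an immediate consequence of Theorem~\ref{theorem:msh} once the degree of $\mathscr{E}^{0,2}$ is known from the preceding Proposition. The transcendental variation of Hodge structure attached to a family of K3 surfaces has weight $n = 2$, so the relevant instance of Theorem~\ref{theorem:msh} is the case $n = 2$, which yields
\begin{gather*}
h^{0,3} = h^1\big(\mathbb{P}^1, \mathscr{E}^{0,2}\big) = h^0\big(\mathbb{P}^1, \mathscr{O}_{\mathbb{P}^1}\big({-}2 - \deg \mathscr{E}^{0,2}\big)\big).
\end{gather*}
This identification of the weight, and the resulting single application of Theorem~\ref{theorem:msh}, is the only structural input required; everything else is algebraic substitution.

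The second step is to insert the value of $\deg \mathscr{E}^{0,2}$ supplied by the preceding Proposition into the argument of $\mathscr{O}_{\mathbb{P}^1}$. Expanding $-2 - \deg \mathscr{E}^{0,2}$, the two constant contributions combine to give $-4$, the term $a_f$ changes sign to become positive, the term $\frac{1}{2}a_{1/2}$ becomes negative, and the two sums over $\Delta_a$ and $\Delta$ each acquire an overall minus sign. Collecting these terms reproduces precisely the expression
\begin{gather*}
a_f - 4 - \frac{1}{2}a_{1/2} - \sum_{p \in \Delta_a}\big(\mu_2^p - \mu_1^p - 1\big) - \sum_{p \in \Delta}\big(\lfloor \mu_2^p \rfloor - \lfloor \mu_1^p \rfloor - 1\big)
\end{gather*}
appearing in the statement, which finishes the argument.

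I do not expect any genuine obstacle here: once the weight is recognized as $n = 2$ and Theorem~\ref{theorem:msh} is invoked, the corollary is a one-line rearrangement of the Proposition's degree formula. The only point demanding care is the sign bookkeeping in passing from $\deg \mathscr{E}^{0,2}$ to $-2 - \deg \mathscr{E}^{0,2}$, and this is entirely routine.
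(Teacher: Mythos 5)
Your proposal is correct and is exactly the paper's argument: the paper states this corollary with no separate proof precisely because it follows by applying Theorem~\ref{theorem:msh} with $n=2$ (the weight of the transcendental K3 variation of type $(1,1,1)$) and substituting the value of $\deg \mathscr{E}^{0,2}$ from the preceding Proposition, with the sign bookkeeping you describe. Nothing is missing.
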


Now we may ask ourselves: when can such a f\/ibration by K3 surfaces have a Calabi--Yau threefold total space?

\begin{Definition}
A weight $n$ Hodge structure with Hodge numbers $h^{p,q}$ is called \emph{Calabi--Yau} if $h^{n,0}=1$.
\end{Definition}

If a variety $X$ is Calabi--Yau of dimension $n$, then the Hodge structure on~$H^n(X,\mathbb{Q})$ is certainly Calabi--Yau, but the converse is not true. For example, look at any blow-up of an honest Calabi--Yau variety, or a Kulikov surface.

According to Corollary \ref{cor:computeK3}, a family of K3 surfaces $\mathcal{X} \to U$ has Calabi--Yau Hodge structure on $H^1(\mathbb{P}^1,j_*\mathbb{V})$ if and only if
\begin{gather}\label{CYK3}
a_f = 4+ \frac{1}{2}a_{1/2} + \sum_{p \in \Delta_a} \big(\mu_2^p - \mu_1^p - 1\big) + \sum_{p \in \Delta} \big(\lfloor \mu^p_2 \rfloor - \lfloor \mu_1^p \rfloor-1\big).
\end{gather}
By the degeneration of the Leray spectral sequence \cite[Section 15]{htdcl2cpm}, for any compactif\/ication $\overline{\mathcal{X}}$ of $\mathcal{X}$, this parabolic cohomology group is a direct summand of the Hodge structure on $H^3(\overline{\mathcal{X}},\mathbb{Q})$, and its complement has no $(3,0)$ or $(0,3)$ part (see \cite{cytfhrlpk3s}). Thus, if equation \eqref{CYK3} does not hold, then $\mathcal{X} \to \mathbb{P}^1$ has no compactif\/ication which is a Calabi--Yau threefold. In \cite{cytfhrlpk3s} we will study the converse problem in specif\/ic examples.

\subsection{Base change of a hypergeometric family}
We illustrate these results with an example regarding families of K3 surfaces and their pullbacks. Let us take the family of mirror quartic K3 surfaces written as the compactif\/ications in $\mathbb{P}^3$ of the f\/ibres of the Laurent polynomial
\begin{gather*}f(x,y,z) = \frac{(x + y + z + 1)^4}{xyz}.\end{gather*}
Narumiya and Shiga \cite[Section 5]{mmfk3sis3drp} show that this family has Picard--Fuchs equation which is hypergeometric and may be written as
\begin{gather*}
\delta^3 + t\left(\delta + \tfrac{1}{4}\right)\left(\delta + \tfrac{1}{2}\right)\left(\delta + \tfrac{3}{4}\right).
\end{gather*}
The local monodromies around its singular points are of type $T_{\mathrm{un}}^2$, $T_i$, and $T_\mathrm{nod}$, over $0$, $\infty$, and~$1$ respectively, so the Riemann scheme of this dif\/ferential operator is
\begin{gather*}\left\{\begin{matrix} 0 & \infty & 1 \\\hline
0 & 1/4 & 0 \\
0 & 1/2 & 1/2\\
0 & 3/4 & 1
 \end{matrix}\right\}.\end{gather*}

Now consider the base change of this local system along a map $g\colon \mathbb{P}^1 \rightarrow \mathbb{P}^1$. Let~$\mathscr{E}^{i,j}_g$ denote the appropriate Hodge bundles of the pulled-back local system $g^* \mathbb{V}$ and let $h^{i,j}_g$ be the Hodge numbers of its parabolic cohomology. Let $d$ be the degree of $g$, and set $k$, $\ell$, $m$ to be the numbers of points over~$0$,~$\infty$ and~$1$ respectively. Let
\begin{gather*}r = \sum_{p \in \mathbb{P}^1 \setminus \{0,1,\infty \} }(e_{p}-1)\end{gather*}
denote the degree of ramif\/ication of $g$ away from $\{0,1,\infty\}$. We will write $[y_1,\dots, y_\ell]$ for the partition of $d$ encoding the ramif\/ication prof\/ile over $\infty$. Finally, def\/ine
\begin{gather*}D_g := a_f - a_{1/2} - 4 - \sum_{p \in \Delta_a} (\mu_2^p - \mu_1^p - 1) - \sum_{p \in \Delta} \big(\lfloor \mu^p_2 \rfloor - \lfloor \mu_1^p \rfloor-1\big),\end{gather*}
where all terms in this expression are computed for the pulled-back local system~$g^*\mathbb{V}$. Then we have the following proposition.

\begin{Proposition} \label{prop:mirrorquartics}
With assumptions and notation as above,
\begin{gather*}D_g = a_f -2 + \sum_{i=1}^\ell \left\lfloor \frac{y_i}{4}\right\rfloor.\end{gather*}
In particular, $h^{0,3}_g = h^0\big(\mathbb{P}^1, \mathscr{O}_{\mathbb{P}^1}(D_g)\big)$.
\end{Proposition}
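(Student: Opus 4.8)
The plan is to write down the complete Riemann scheme of the pulled-back operator $g^*L$ from that of $L$ displayed above, compute $\deg\mathscr{E}^{0,2}_g$ from it by means of the degree Proposition preceding Corollary \ref{cor:computeK3}, and then conclude with Theorem \ref{theorem:msh}. Away from the ramification locus of $g$ the operator $g^*L$ is locally identical to $L$, so the only points carrying nontrivial data are those of $g^{-1}(\{0,1,\infty\})$ together with the ramification points lying over $\mathbb{P}^1\setminus\{0,1,\infty\}$. By the base-change Proposition, base change of order $e$ multiplies every characteristic exponent by $e$, and by its Corollary the latter ramification points become apparent singularities of $g^*L$; this pins down the whole scheme.

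First I would tabulate the local contributions. Over a point of $g^{-1}(0)$ of index $e$ the exponents become $(0,0,0)$ with nontrivial unipotent monodromy, an actual singularity lying in neither $a_f$ nor $a_{1/2}$. Over a point of $g^{-1}(1)$ of index $e$ the exponents become $(0,e/2,e)$: an apparent singularity when $e$ is even, and an actual singularity of type $T_\mathrm{nod}$ (hence in $a_{1/2}$ but not $a_f$) when $e$ is odd. Over a point of $g^{-1}(\infty)$ of index $y_i$ the exponents become $\big(y_i/4,\,y_i/2,\,3y_i/4\big)$, and a short analysis of $y_i$ modulo $4$ identifies the monodromy $T_i^{y_i}$ with one of the types of Example \ref{ex:K3}; it lies in $a_f$ precisely when $y_i$ is not divisible by $4$ and in $a_{1/2}$ precisely when $y_i$ is odd. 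Each of the remaining ramification points of index $e$ contributes an apparent singularity with exponents $(0,e,2e)$.

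The decisive simplification is that apparent singularities have integer exponents, so the two sums in the degree Proposition merge into the single expression $\sum_{p\in\mathbb{P}^1}\big(\lfloor\mu_2^p\rfloor-\lfloor\mu_1^p\rfloor-1\big)$ of the Remark following it. This sum contributes $-1$ at each point of $g^{-1}(0)$, $\lfloor e/2\rfloor-1$ at each point of $g^{-1}(1)$, $\lfloor y_i/2\rfloor-\lfloor y_i/4\rfloor-1$ at each point of $g^{-1}(\infty)$, and $e-1$ at each remaining ramification point, the last summing to $r$. Writing $k,\ell,m$ for the cardinalities of the three fibres and using $a_f=\ell-\#\{i:4\mid y_i\}$, $\deg\mathscr{E}^{1,1}_g=-\tfrac12 a_{1/2}$, the elementary identities $\sum_i\lfloor y_i/2\rfloor=\tfrac12\big(d-\#\{i:y_i\ \text{odd}\}\big)$ and $\sum_{p\in g^{-1}(1)}\lfloor e_p/2\rfloor=\tfrac12\big(d-\#\{p\in g^{-1}(1):e_p\ \text{odd}\}\big)$, and the Riemann--Hurwitz relation $k+\ell+m=d+r+2$, I expect the whole sum to collapse to $-2-\tfrac12 a_{1/2}-\sum_i\lfloor y_i/4\rfloor$. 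Feeding this into the degree Proposition, the half-integer ($a_{1/2}$) terms cancel and leave $\deg\mathscr{E}^{0,2}_g=-a_f-\sum_i\lfloor y_i/4\rfloor$. Since $D_g=-2-\deg\mathscr{E}^{0,2}_g$, this is exactly $D_g=a_f-2+\sum_i\lfloor y_i/4\rfloor$, and the final assertion then follows immediately from Theorem \ref{theorem:msh} (equivalently Corollary \ref{cor:computeK3}), $D_g$ being precisely the argument of $\mathscr{O}_{\mathbb{P}^1}$ appearing there.

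The step I expect to require the most care is the residue analysis over $\infty$: one must correctly assign the three scaled exponents to the Hodge pieces $\mathscr{E}^{2,0},\mathscr{E}^{1,1},\mathscr{E}^{0,2}$ and identify the conjugacy class of $T_i^{y_i}$ in each class modulo $4$, so as to decide $a_f$- and $a_{1/2}$-membership uniformly. One must also heed the warning of the Remark preceding Section \ref{sect:Ell}, that the Deligne extension does not commute with base change: the valid computation uses the exponents of $g^*L$ themselves (the original exponents scaled by the ramification indices), of which only the differences and fractional parts---precisely the data entering Theorem \ref{thm:ekmz}---govern the degrees of the bundles $\mathscr{E}^{p,q}_g$.
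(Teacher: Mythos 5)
Your proof is correct and follows essentially the same route as the paper's: tabulate the characteristic exponents and monodromy types of $g^*L$ over each fibre of $g$ (plus the apparent singularities at the remaining ramification points), merge the two singularity sums using integrality of exponents at apparent singularities, and collapse everything via Riemann--Hurwitz before invoking the degree formula and Theorem~\ref{theorem:msh}. Your bookkeeping is in fact cleaner than the paper's own (whose proof mislabels the fibre sums and writes the $g^{-1}(0)$ contribution as $+k$ rather than $-k$, and whose displayed definition of $D_g$ misprints $\tfrac{1}{2}a_{1/2}$ as $a_{1/2}$); your identification $D_g = -2 - \deg\mathscr{E}^{0,2}_g$ is the intended reading, consistent with Corollary~\ref{cor:computeK3} and with the fact that the $a_{1/2}$-terms cancel.
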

\begin{proof} It follows from the def\/inition that $a_f$ is equal to the number of points over $0$ with ramif\/ication of order not divisible by~$4$. Let $\ell_{\mathrm{odd}}$ and $m_{\mathrm{odd}}$ denote the number of points over $0$ and $1$, respectively, where the order of ramif\/ication is odd. Then
\begin{gather*}a_{1/2} = m_\mathrm{odd} + \ell_\mathrm{odd}.\end{gather*}

Apparent singularities appear only at ramif\/ication points of $g$ away from points over $\{0,1,\infty\}$, and each ramif\/ication point has $\mu^p_2 - \mu^p_1 - 1$ equal to $e_p -1$. Therefore
\begin{gather*}\sum_{\Delta_a}\big(\mu^p_2 - \mu^p_1 - 1\big) = \sum_{p \in \mathbb{P}^1 \setminus \{0,1,\infty \} } (e_p -1) = r.\end{gather*}

We split the set $\Delta$ into three components, corresponding to points over $0$, $1$, and $\infty$ respectively, and denoted $\Delta_0$, $\Delta_1$, and $\Delta_\infty$. The corresponding sums are
\begin{gather*}
\sum_{p \in \Delta_0} \big(\lfloor \mu_2^p \rfloor - \lfloor \mu_1^p \rfloor -1\big) = \frac{1}{2}(d - \ell_\mathrm{odd}) - \sum_{i=1}^\ell \big( \lfloor \frac{y_i}{4} \rfloor\big) - \ell, \\
\sum_{p \in \Delta_1} \big(\lfloor \mu_2^p \rfloor - \lfloor \mu_1^p \rfloor -1\big) = \frac{1}{2}(d- m_\mathrm{odd}) - m,\\
\sum_{p \in \Delta_\infty} \big(\lfloor \mu_2^p \rfloor - \lfloor \mu_1^p \rfloor -1\big) = k.
\end{gather*}
Substituting everything into $D_g$, we obtain
\begin{gather*}D_g = a_f - 4 -d - r + k+ \ell + m + \sum_{i=1}^\ell \left\lfloor \frac{y_i}{4} \right\rfloor.\end{gather*}

Now, since $g$ is a map from $\mathbb{P}^1$ to $\mathbb{P}^1$, the Riemann--Hurwitz formula gives
\begin{gather*}2d-2 = \sum_{p} (e_p-1) = r + (d-\ell) + (d- m) + (d-k),\end{gather*}
which implies that $ k + \ell + m -d-r = 2$. Making this substitution, we f\/ind that
\begin{gather*}D_g = a_f-2 +\sum_{i=1}^\ell \left\lfloor \frac{y_i}{4} \right\rfloor,\end{gather*}
which proves the proposition.
\end{proof}

As a corollary, we can approximately recover \cite[Proposition 2.4]{cytfmqk3s} (see also \cite{cytfhrlpk3s} for some further applications of this result).
\begin{Corollary}
The Hodge structure of $g^*\mathbb{V}$ is Calabi--Yau if and only if one of the following two conditions holds:
\begin{enumerate}\itemsep=0pt
 \item[$1)$] the number $\ell$ is equal to $2$ and $1 \leq y_1,y_2\leq 4$; or
 \item[$2)$] the number $\ell$ is equal to $1$ and $5 \leq y_1 \leq 8$.
\end{enumerate}
\end{Corollary}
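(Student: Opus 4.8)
The plan is to use Proposition~\ref{prop:mirrorquartics} to reduce the Calabi--Yau condition to a single numerical equation in the ramification profile $[y_1,\dots,y_\ell]$ over $\infty$, and then to enumerate its finitely many solutions. To set this up I would first unpack the definition: the parabolic cohomology $H^1(\mathbb{P}^1, j_* g^*\mathbb{V})$ carries a pure Hodge structure of weight $3$, so it is Calabi--Yau exactly when $h^{3,0}_g = 1$, which by Hodge symmetry is equivalent to $h^{0,3}_g = 1$. Proposition~\ref{prop:mirrorquartics} gives $h^{0,3}_g = h^0(\mathbb{P}^1, \mathscr{O}_{\mathbb{P}^1}(D_g))$ with $D_g = a_f - 2 + \sum_{i=1}^\ell \lfloor y_i/4\rfloor$, and since $h^0(\mathbb{P}^1, \mathscr{O}_{\mathbb{P}^1}(D)) = \max(0, D+1)$, this dimension equals $1$ if and only if $D_g = 0$. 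Thus the Calabi--Yau condition is
\[
a_f + \sum_{i=1}^\ell \left\lfloor \frac{y_i}{4} \right\rfloor = 2.
\]

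Next I would eliminate $a_f$ in favour of the $y_i$. The proof of Proposition~\ref{prop:mirrorquartics} identifies $a_f$ with the number of preimages of the $T_i$-singularity at $\infty$ whose ramification index is not divisible by $4$: concretely, $T_i$ has order $4$, so the local monodromy $T_i^{y_i}$ is trivial precisely when $4 \mid y_i$ and is otherwise conjugate to one of $T_i$, $T_i^2$. Hence $a_f = \#\{i : 4 \text{ does not divide } y_i\}$. Because $\lceil y/4\rceil$ exceeds $\lfloor y/4\rfloor$ by $1$ when $y$ is not a multiple of $4$ and equals it otherwise, adding $a_f$ to $\sum_i \lfloor y_i/4\rfloor$ produces $\sum_i \lceil y_i/4\rceil$, and the Calabi--Yau condition becomes
\[
\sum_{i=1}^\ell \left\lceil \frac{y_i}{4} \right\rceil = 2.
\]

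Finally I would solve this equation over the positive integers. Each $y_i \ge 1$ forces $\lceil y_i/4\rceil \ge 1$, so a sum of $\ell$ such terms equals $2$ only if either $\ell = 2$ with both ceilings equal to $1$, or $\ell = 1$ with the single ceiling equal to $2$. Since $\lceil y/4\rceil = 1 \Leftrightarrow 1 \le y \le 4$ and $\lceil y/4\rceil = 2 \Leftrightarrow 5 \le y \le 8$, these are precisely the two cases of the statement.

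I expect no essential obstacle: once Proposition~\ref{prop:mirrorquartics} is granted, the argument is bookkeeping. The one step meriting care is the identification $a_f = \#\{i : 4 \text{ does not divide } y_i\}$, which records how order-$y_i$ base change scales the exponents $(1/4,1/2,3/4)$ at $\infty$, equivalently how $T_i^{y_i}$ depends on $y_i \bmod 4$; this computation is already contained in the proof of Proposition~\ref{prop:mirrorquartics} and may simply be invoked.
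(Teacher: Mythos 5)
Your proof is correct and follows essentially the same route as the paper's: both reduce, via Corollary~\ref{cor:computeK3} and Proposition~\ref{prop:mirrorquartics}, the Calabi--Yau condition to $a_f + \sum_{i=1}^\ell \lfloor y_i/4\rfloor = 2$ and then enumerate the solutions. Your version is simply more explicit than the paper's two-line enumeration---the identification $a_f = \#\{i \colon 4 \nmid y_i\}$ (correctly placed at the preimages of the $T_i$-point at $\infty$, where the paper's own proof sloppily writes ``over $0$'') and the rewriting $a_f + \sum_i \lfloor y_i/4\rfloor = \sum_i \lceil y_i/4\rceil$ make the case analysis immediate.
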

\begin{proof}
By Corollary \ref{cor:computeK3} and Proposition \ref{prop:mirrorquartics}, we may assume that
\begin{gather*}a_f + \sum_{i=1}^\ell \lfloor \frac{y_i}{4} \rfloor = 2.\end{gather*}
Therefore, we have at most two points over $0$. If we have two points over $0$, then they must both have $1 \leq y_i \leq 4$. If there is a single point over $0$, then $5 \leq y_1 \leq 8$.
\end{proof}

\section{Families of Calabi--Yau threefolds} \label{sect:CY3}

In this section, we will f\/irst apply the results of Section \ref{sec:impres} to complete the computations of~\cite{hca1pfcy3f} and~\cite{hnccytls}. Then we will determine the Hodge numbers of some specif\/ic fourfolds f\/ibred by quintic mirror Calabi--Yau threefolds.

\subsection{Inhomogeneous Picard--Fuchs equations and Calabi--Yau threefolds}
In this section, we will perform some computations related to work of del Angel, M\"uller-Stach, van Straten, and Zuo~\cite{hca1pfcy3f}. These authors were interested in determining situations in which families of Calabi--Yau threefolds admit normal functions; here a \emph{normal function} is an algebraic cycle on each f\/ibre of a family of varieties, which is homologous to zero on all f\/ibres.

In particular, if we have some algebraic $(2,2)$-class $Z$ on a family of Calabi--Yau threefolds, whose restriction to each f\/ibre is homologous to zero, then $Z$ determines a multivalued function~$\Phi_Z$ on the base of the f\/ibration. From such a normal function, one obtains an inhomogeneous extension of the Picard--Fuchs equation $L_\sigma$ which is satisf\/ied by $\Phi_Z$. In other words,
\begin{gather*}L_\sigma \Phi_Z = g_Z,\end{gather*}
for some holomorphic function $g_Z$. Morrison and Walcher \cite{dbnf, ehalaots} studied a specif\/ic such normal function on the Fermat family of quintics, and showed that $\Phi_Z$ can be interpreted as the domain wall tension of certain $D$-branes.

The computations in \cite{hca1pfcy3f} aimed to determine whether such normal functions exist on other $1$-parameter families of Calabi--Yau threefolds. Evidence for the existence of normal functions can be found by determining whether the parabolic cohomology of $L_\sigma$ has classes of type $(2,2)$ or not. They looked at the 14 hypergeometric examples of Doran and Morgan~\cite{doranmorgan} and their pullbacks along maps $[t:s] \mapsto [t^d: s^d]$, for certain values of $d$, in an ef\/fort to determine whether, after such base change, the $14$ families could admit interesting normal functions.

In both \cite{hca1pfcy3f} and the subsequent work of Hollborn and M\"uller-Stach \cite{hnccytls}, the authors were not able to compute the degrees of the bundles $\mathscr{E}^{2,1}$ in many situations, and therefore were not able to determine the Hodge numbers of the parabolic cohomology groups that they were interested in. By applying the results of Section \ref{sec:impres}, we are now able to f\/inish their computations.

\begin{Example}
Let's compute the Hodge numbers of the VHS obtained from the second hypergeometric example in the list of Doran and Morgan \cite{doranmorgan}. This example is obtained in the following way. Begin with the hypergeometric operator
\begin{gather*}\delta^4 - t\left(\delta +\frac{1}{10} \right)\left(\delta +\frac{3}{10} \right)\left(\delta +\frac{7}{10} \right)\left(\delta +\frac{9}{10} \right).\end{gather*}
This operator underlies a $(1,1,1,1)$-type rational VHS, and has local monodromy matrices
\begin{gather*}T_0 = \left( \begin{matrix} 1& 1 & 0 & 0 \\ 0 & 1 & 1 & 0 \\ 0 & 0 & 1& 1 \\ 0 & 0 & 0 & 1 \end{matrix}\right),\qquad T_1 = \left( \begin{matrix} 1& 0 & 0 & 0 \\ 0 & 1 & 0& 0 \\ 0 & 0 & 1& 1 \\ 0 & 0 & 0 & 1 \end{matrix}\right),\qquad T_\infty = \left( \begin{matrix} \zeta_{10} & 0 & 0 & 0 \\ 0 & \zeta_{10}^3 & 0 & 0 \\ 0 & 0 & \zeta_{10}^7 & 0 \\ 0 & 0 & 0 & \zeta_{10}^9 \end{matrix}\right),\end{gather*}
where $\zeta_{10}$ is a primitive tenth root of unity. Its Riemann scheme is
\begin{gather*}\left\{
\begin{matrix}
0 & 1 & \infty \\ \hline
0 & 0 & 1/10 \\
0 & 1 & 3/10 \\
0 & 1 & 7/10 \\
0 & 2 & 9/10
\end{matrix} \right\}.\end{gather*}

Let us take the base change of this dif\/ferential equation under the map of degree $5$ which is totally ramif\/ied over $0$ and $\infty$. The new dif\/ferential equation has seven singular points and Riemann scheme
\begin{gather*}\left\{
\begin{matrix}
0 & t^5 -1 = 0 & \infty \\ \hline
0 & 0 & 1/2 \\
0 & 1 & 3/2 \\
0 & 1 & 7/2 \\
0 & 2 & 9/2
\end{matrix} \right\}.\end{gather*}
The points over $1$ all have monodromy of type $T_1$, the point over $0$ has monodromy of type~$T_0$, and the point over $\infty$ has monodromy matrix equal to $-\mathrm{Id}_4$. Therefore, the only point with nontrivial parabolic f\/iltration is $\infty$ and the only f\/iltrands are of weight~$1/2$. Thus equation~\eqref{eq:pardeg} and Lemma~\ref{lemma:conj} give
\begin{gather*}\deg \mathscr{E}^{1,2} + \deg \mathscr{E}^{2,1} = -1.\end{gather*}

By Theorem \ref{thm:ekmz}, the map $\theta_1\colon \mathscr{E}^{2,1} \rightarrow \mathscr{E}^{1,2} \otimes \Omega_{\mathbb{P}^1}(\Delta)$ has cokernel of length $2$. Therefore,
\begin{gather*}\deg \mathscr{E}^{2,1} - \deg \mathscr{E}^{1,2} = 3.\end{gather*}
Hence $\deg \mathcal{E}^{2,1} = 1$. This gives the correct $b$ value in Theorem \ref{thm:mash}, which in turn allows us to complete the corresponding row of the table in \cite[Section 4]{hnccytls}; we obtain that the Hodge numbers of~$g^*L$ are $(0,1,2,1,0)$.

Similarly, if we take $g$ to be the degree $10$ map ramif\/ied completely at $0$ and $\infty$, then
\begin{gather*}\deg \mathscr{E}^{1,2} + \deg \mathscr{E}^{2,1} = 0.\end{gather*}
In this case $\theta_1$ has cokernel of length $4$, so
\begin{gather*}\deg \mathscr{E}^{2,1} - \deg \mathscr{E}^{1,2} = 6,\end{gather*}
and therefore $\deg \mathscr{E}^{2,1} =3$. Once again, this allows us to complete the corresponding row of the table in \cite[Section 4]{hnccytls}; we obtain that the Hodge numbers of $g^*L$ are $(0,1,3,1,0)$.
\end{Example}

Using this, we may complete the calculations left unf\/inished by \cite{hca1pfcy3f} and \cite{hnccytls}. Table~\ref{tab:msh} is a~reproduction of the table from \cite[Section~4]{hnccytls}, with all missing pieces completed; our new results are highlighted in gray. Our notation is as follows. \#~is the case number from the table in \cite[Section~4]{hnccytls}, and the numbers $(\alpha_1,\alpha_2,\alpha_3,\alpha_4)$ are the numbers determining the corresponding hypergeometric dif\/ferential equations
\begin{gather*}L = \delta^4 - t(\delta + \alpha_1) (\delta + \alpha_2) (\delta + \alpha_3) (\delta + \alpha_4).\end{gather*}
The number $d$ indicates that we have pulled back by the covering $[t:s] \mapsto [t^d:s^d]$, to obtain a VHS over $\mathbb{P}^1$ with Hodge bundles $\mathscr{E}^{3,0}$ of degree $a$ and $\mathscr{E}^{2,1}$ of degree $b$. We then apply the results of Section \ref{sec:impres} to compute the Hodge numbers of the corresponding parabolic cohomology groups in the f\/inal column.

\begin{Remark} Recently there has been a great deal of interest in studying the monodromy subgroups of $\mathrm{Sp}(4,\mathbb{R})$ in the $14$ hypergeometric examples of Doran and Morgan \cite{doranmorgan}. These subgroups may be either arithmetic or non-arithmetic (more commonly called \emph{thin}). Singh and Venkataramana \cite{a4mgacyt, acshg} have proved that the monodromy subgroup is arithmetic in seven of the $14$ cases, and Brav and Thomas \cite{tmsp4} have proved that it is thin in the remaining seven. The arithmetic/thin status of each of the examples is given in the ``Monodromy'' column of Table \ref{tab:msh}.

Computations carried out in \cite{lblefbc} suggest that the arithmetic/thin dichotomy for monodromy groups of hypergeometric local systems is closely related to the value of the \emph{Lyapunov exponents} of the corresponding variations of Hodge structure. Lyapunov exponents are dynamical invariants associated to a f\/lat vector bundle over a hyperbolic Riemann surface, that are computed using the monodromy of the f\/lat bundle as well as the metric structure on the hyperbolic curve.

The main goal of \cite{lblefbc} is to study these Lyapunov exponents; \cite[Lemma 6.3]{lblefbc} (restated above as Theorem \ref{thm:ekmz}) is a step towards this. In \cite{lblefbc} it is conjectured that for the $14$ hypergeometric variations of Hodge structure of Doran and Morgan \cite{doranmorgan}, the associated Lyapunov exponents are closely related to the parabolic degrees of the bundles $\mathscr{E}^{3,0}$ and $\mathscr{E}^{2,1}$, provided that these degrees satisfy a certain relation (see \cite[Conjecture 6.4]{lblefbc} and \cite{fougeron} for the higher rank case).

It is not clear whether there is any relation between the computations presented here and Lyapunov exponents; however, it would be interesting to see whether there is any relation between Lyapunov exponents and the asymptotic growth of $\deg_\mathrm{par} \mathscr{E}^{3,0}$ and $\deg_\mathrm{par} \mathscr{E}^{2,1}$ as the degree $d$ of the covering $[t:s] \mapsto [t^d: s^d]$ grows. Such growth has been investigated in the rank~$2$ case by Kappes~\cite{kappes}.

Finally we note that, according to work of Doran and Malmendier~\cite{cymrsrmt}, most of Doran's and Morgan's~\cite{doranmorgan} $14$ hypergeometric local systems can be obtained from hypergeometric local systems of rank~$3$. These rank $3$ hypergeometric local systems underlie variations of Hodge structure corresponding to families of K3 surfaces, for which the Lyapunov exponents have been computed by Filip \cite{filip}. It seems plausible that the iterated construction of~\cite{cymrsrmt} along with the computations of~\cite{filip} could allow one to compute Lyapunov exponents for the fourteen rank $4$ hypergeometric local systems of~\cite{doranmorgan}.
\end{Remark}

\begin{table}
\centering
\resizebox*{!}{\textheight-6ex}{
\begin{tabular}{|c|c|c|c|c|c|c|}
\hline
\#&$(\alpha_1,\alpha_2,\alpha_3,\alpha_4)$ & $d$ & $a$ & $b$ & Hodge numbers & Monodromy
\\ \hline\hline
1 & $(1/5,\,2/5,\,3/5,\,4/5)$ & $1$ & $0$ & $0$ & $(0,0,0,0,0)$& Thin \\
&& $2$ & $0$ & $0$ & $(0,0,1,0,0)$& \\
&& $5$ & $1$ & $2$ & $(0,0,0,0,0)$& \\
&& $10$ & $2$ & $4$ & $(1,1,1,1,1)$ &\\
\hline
2& $(1/10,\,3/10,\,7/10,\,9/10)$ & $1$ & $0$ & $0$ & $(0,0,0,0,0)$& Arithmetic \\
&& $2$ & $0$ & $0$ & $(0,0,1,0,0)$& \\
& & $5$ & $0$ & \cellcolor{lightgray} $1$ & \cellcolor{lightgray}$(0,1,2,1,0)$&\\
& &$10$ & $1$ & \cellcolor{lightgray} $3$ & \cellcolor{lightgray}$(0,1,3,1,0)$&\\
\hline
3 & $(1/2,\,1/2,\,1/2,\,1/2)$ & $1$ & $0$ & $0$ & $(0,0,0,0,0)$& Thin \\
&& $2$ & $1$ & $1$ & $(0,0,0,0,0)$ &\\
&& $2k$ & $k$ & $k$ & $(k-1,0,0,0,k-1)$ & \\
\hline
4 & $(1/3,\,1/3,\,2/3,\,2/3)$ & $1$ & $0$ & $0$ & $(0,0,0,0,0)$& Arithmetic \\
& & $2$ & $0$ & $0$ & $(0,0,1,0,0)$ &\\
& & $3$ & $1$ & $1$ & $(0,0,0,0,0)$ &\\
& & $6$ & $2$ & $2$ & $(1,0,1,0,1)$ &\\
\hline
5 &$(1/3,\,1/2,\,1/2,\,2/3)$ & $1$ & $0$ & $0$ & $(0,0,0,0,0)$& Thin \\
& & $6$ & $1$ & \cellcolor{lightgray} $2$ & \cellcolor{lightgray}$(0,0,2,0,0)$& \\ \hline
6 & $(1/4,\, 1/2,\, 1/2,\, 3/4)$ & $1$ & $0$ & $0$ & $(0,0,0,0,0)$& Thin \\
& & $4$ & $1$ & $2$ & $(0,0,0,0,0)$& \\
& & $8$ & $2$ & $4$ & $(1,1,0,1,1)$& \\
\hline
7&$(1/8,\,3/8,\,5/8,\,7/8)$ & $1$ & $0$ & $0$ & $(0,0,0,0,0)$& Thin \\
&&$2$ & $0$ & $0$ & $(0,0,1,0,0)$& \\
&& $4$ & $0$ & \cellcolor{lightgray} $1$ & \cellcolor{lightgray}$(0,1,1,1,0)$& \\
&& $8$ & $1$ & \cellcolor{lightgray} $3$ & \cellcolor{lightgray}$(0,1,1,1,0)$& \\ \hline
8 & $(1/6,\, 1/3,\, 2/3,\, 5/6)$ & $1$ & $0$ & $0$ & $(0,0,0,0,0)$& Arithmetic \\
&& $2$ & $0$ & $0$ & $(0,0,1,0,0)$& \\
& & $6$ & $1$ & $2$ & $(0,0,1,0,0)$& \\
\hline
9&$(1/12,\,5/12,\,7/12,\,11/12) $ & $1$ & $0$ & $0$ & $(0,0,0,0,0)$& Thin \\
&& $2$ & $0$ & $0$ & $(0,0,1,0,0)$& \\
&& $3$ & $0$ & \cellcolor{lightgray} $1$ & \cellcolor{lightgray}$(0,1,0,1,0)$& \\
&& $4$ & $0$ & \cellcolor{lightgray} $1$ & \cellcolor{lightgray}$(0,1,1,1,0)$& \\
&& $6$ & $0$ & \cellcolor{lightgray} $2$ & \cellcolor{lightgray}$(0,2,1,2,0)$ &\\
&& $12$ & $1$ & \cellcolor{lightgray}$5$ & \cellcolor{lightgray}$(0,3,1,3,0)$& \\\hline
10 & $(1/4,\, 1/4,\, 3/4,\, 3/4)$& $1$ & $0$ & $0$ & $(0,0,0,0,0)$& Arithmetic \\
&&$2$ & $0$ & $0$ & $(0,0,1,0,0)$& \\
&& $4$ & $1$ & $1$ & $(0,0,1,0,0)$& \\
&& $8$ & $2$ & $2$ & $(1,0,3,0,1)$& \\
\hline
11&$(1/6,\,1/4,\,3/4,\,5/6)$ & $1$ & $0$ & $0$ & $(0,0,0,0,0)$& Arithmetic \\
&& $2$ & $0$ & $0$ & $(0,0,1,0,0)$& \\
&& $12$ & $1$ & \cellcolor{lightgray}$3$ &\cellcolor{lightgray}$(0,1,5,1,0)$& \\ \hline
12&$(1/4,\,1/3,\,2/3,\,3/4)$ & $1$ & $0$ & $0$ & $(0,0,0,0,0)$& Arithmetic \\
&& $2$ & $0$ & $0$ & $(0,0,1,0,0)$& \\
&& $3$ & $0$ & \cellcolor{lightgray}$1$ & \cellcolor{lightgray}$(0,1,0,1,0)$ & \\
&&$12$ & $1$ & \cellcolor{lightgray}$4$ & \cellcolor{lightgray}$(0,2,3,2,0)$& \\ \hline
13&$(1/6,\,1/6,\,5/6,\,5/6)$ & $1$ & $0$ & $0$ & $(0,0,0,0,0)$ & Arithmetic \\
&& $2$ & $0$ & $0$ & $(0,0,1,0,0)$& \\
&& $3$ & $0$ & \cellcolor{lightgray}$0$ & \cellcolor{lightgray}$(0,0,2,0,0)$ &\\
&& $6$ & $1$ & \cellcolor{lightgray} $1$ & \cellcolor{lightgray}$(0,0,3,0,0)$ &\\ \hline
14&$(1/6,\,1/2,\,1/2,\,5/6)$ & $1$ & $0$ & $0$ & $(0,0,0,0,0)$& Thin\\
&& $3$ & $0$ & \cellcolor{lightgray}$1$ & \cellcolor{lightgray}$(0,1,0,1,0)$& \\
&& $6$ & $1$ & \cellcolor{lightgray}$3$ &\cellcolor{lightgray}$(0,1,0,1,0)$&\\
\hline
\end{tabular}}
\caption{Invariants for families of Calabi--Yau threefolds.}
\label{tab:msh}
\end{table}

\begin{Remark}
We pose the question as to whether the above Hodge structures are reducible or not. It is possible that the $h^{2,2}$ value in each of the sets of Hodge numbers in the above table all correspond to rational Hodge classes, thus induce normal functions on the corresponding families of Calabi--Yau threefolds. The maps along which we have pulled back are rigid in the sense that any deformation of this map will change the Hodge numbers, so this indeed seems possible.
\end{Remark}

\subsection{Calabi--Yau fourfolds f\/ibred by mirror quintics}
Finally, we will apply the same approach to classifying Calabi--Yau fourfolds f\/ibred by Calabi--Yau threefolds as we did with Calabi--Yau threefolds f\/ibred by K3 surfaces. We do not obtain concrete classif\/ication results, since modularity results for families of Calabi--Yau varieties of higher dimension are scarce, but we are at least able to place bounds on the space of possibilities.

We start with the mirror quintic family of K3 surfaces. This family can be represented as a~compactif\/ication of the f\/ibres of the Laurent polynomial
\begin{gather*}\frac{(x + y+ z + w + 1)^5}{xyzw}.\end{gather*}
The periods of this family satsify the hypergeometric dif\/ferential equation $L$
\begin{gather*}\delta^4 - t\left(\delta + \frac{1}{5}\right) \left(\delta + \frac{2}{5}\right) \left(\delta + \frac{3}{5}\right) \left(\delta + \frac{4}{5}\right),\end{gather*}
which has local monodromy matrices
\begin{gather*}T_0 = \left( \begin{matrix} 1& 1 & 0 & 0 \\ 0 & 1 & 1 & 0 \\ 0 & 0 & 1& 1 \\ 0 & 0 & 0 & 1 \end{matrix}\right),\qquad T_1 = \left( \begin{matrix} 1& 0 & 0 & 0 \\ 0 & 1 & 0& 0 \\ 0 & 0 & 1& 1 \\ 0 & 0 & 0 & 1 \end{matrix}\right),\qquad T_\infty = \left( \begin{matrix} \zeta_5 & 0 & 0 & 0 \\ 0 & \zeta_{5}^2 & 0 & 0 \\ 0 & 0 & \zeta_{5}^3 & 0 \\ 0 & 0 & 0 & \zeta_{5}^4 \end{matrix}\right),\end{gather*}
and Riemann scheme
\begin{gather*}\left\{
\begin{matrix}
0 & 1 & \infty \\ \hline
0 & 0 & 1/5 \\
0 & 1 & 2/5 \\
0 & 1 & 3/5 \\
0 & 2 & 4/5
\end{matrix} \right\}.\end{gather*}
We can ask: along which maps $g\colon \mathbb{P}^1 \to \mathbb{P}^1$ does the pull-back $g^*L$ have Hodge number $h^{0,4} = 1$? In other words, when can the fourfold total space obtained from the mirror quintic family be a~Calabi--Yau fourfold?

We have all the tools to perform this calculation. Let us take a~map $g\colon \mathbb{P}^1 \to \mathbb{P}^1$ of degree~$d$. Let $k$, $\ell$, and $m$ denote the numbers of points over $0$, $\infty$, and $1$ respectively and let $[y_1,\ldots,y_{\ell}]$ denote the partition of $d$ encoding the ramif\/ication prof\/ile over~$\infty$. Finally, let $\ell_0$ be the number of values of $i$ such that $5 \nmid y_i$. We compute the Hodge numbers of the parabolic cohomology associated to~$g^*L$.

\begin{Proposition}
The degrees of the Hodge bundles of the variation of Hodge structure on $g^*L$ are given by
\begin{gather*}
\deg \mathscr{E}^{3,0} = \frac{1}{2}\bigg( d - \ell_0 - \sum_{i=1}^\ell \big( \lfloor\tfrac{3y_i}{5} \rfloor - \lfloor \tfrac{2y_i}{5} \rfloor \big)\bigg) - \sum_{i=1}^\ell \big( \lfloor\tfrac{2y_i}{5} \rfloor - \lfloor \tfrac{y_i}{5} \rfloor \big)\\
\deg \mathscr{E}^{2,1} = \frac{1}{2}\bigg( d - \ell_0 - \sum_{i=1}^\ell \big( \lfloor\tfrac{3y_i}{5} \rfloor - \lfloor \tfrac{2y_i}{5} \rfloor \big)\bigg),
\end{gather*}
and the equations $\deg \mathscr{E}^{3,0} + \deg \mathscr{E}^{0,3} = -\ell_0$ and $\deg \mathscr{E}^{2,1} + \deg \mathscr{E}^{1,2} = -\ell_0$.
\end{Proposition}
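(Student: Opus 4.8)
The plan is to read all the local data off the Riemann scheme of $g^*L$ and feed it into Theorem~\ref{thm:ekmz}, Lemma~\ref{lemma:conj}, equation~\eqref{eq:pardeg}, and the Riemann--Hurwitz formula; since $g^*L = {}^hL_g$ for some meromorphic $h$, the differences of characteristic exponents of $g^*L$ agree with those of the genuine pulled-back VHS, so Theorem~\ref{thm:ekmz} may be applied directly. First I would record the exponents of $g^*L$, using the base-change computation of Section~\ref{sec:background} (exponents multiply by the ramification index): over $0$ they are $(0,0,0,0)$; over $1$ with index $e$ they are $(0,e,e,2e)$; over $\infty$ with index $y_i$ they are $(y_i/5,2y_i/5,3y_i/5,4y_i/5)$; and at a ramification point away from $\{0,1,\infty\}$, which is necessarily apparent, they are $(0,e_p,2e_p,3e_p)$. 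Identifying the $(i+1)$-st ordered exponent with the Hodge bundle $\mathscr{E}^{3-i,i}$, the only points with a nontrivial parabolic filtration are those over $\infty$, where $\mathscr{E}^{3,0},\mathscr{E}^{2,1},\mathscr{E}^{1,2},\mathscr{E}^{0,3}$ acquire filtrand values $\{y_i/5\},\{2y_i/5\},\{3y_i/5\},\{4y_i/5\}$ (fractional parts).

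The two summed relations then fall out of Lemma~\ref{lemma:conj}. Writing $\deg_\mathrm{par}\mathscr{E}^{p,q} = \deg\mathscr{E}^{p,q} + \sum\alpha\dim\mathscr{E}^{p,q}_\alpha$ from~\eqref{eq:pardeg} and using $\deg_\mathrm{par}\mathscr{E}^{0,3} = -\deg_\mathrm{par}\mathscr{E}^{3,0}$, the only surviving contributions are over $\infty$, giving $\deg\mathscr{E}^{3,0}+\deg\mathscr{E}^{0,3} = -\sum_i(\{y_i/5\}+\{4y_i/5\})$. Since $y_i/5+4y_i/5 = y_i\in\mathbb{Z}$, each summand equals $1$ precisely when $5\nmid y_i$ and $0$ otherwise, so the sum is $\ell_0$; the identical argument applied to $\{2y_i/5\}+\{3y_i/5\}$ yields $\deg\mathscr{E}^{2,1}+\deg\mathscr{E}^{1,2} = -\ell_0$.

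For the explicit degrees I would apply Theorem~\ref{thm:ekmz} to $\theta_1\colon \mathscr{E}^{2,1}\to\mathscr{E}^{1,2}\otimes\Omega_{\mathbb{P}^1}(\Delta)$, whose cokernel length is governed by $\mu_3-\mu_2$. Points over $0$ and $1$ contribute $0$; away-ramification points contribute $e_p-1$, summing to $r$; a point over $\infty$ contributes $\lfloor 3y_i/5\rfloor-\lfloor 2y_i/5\rfloor$ when $5\nmid y_i$ (actual singularity) and $\lfloor 3y_i/5\rfloor-\lfloor 2y_i/5\rfloor-1$ when $5\mid y_i$ (the trivial-monodromy points over $\infty$, which are apparent, and where the apparent-versus-actual formula produces the extra $-1$). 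Setting this total equal to $\deg\mathscr{E}^{1,2}-\deg\mathscr{E}^{2,1}+|\Delta|-2$, substituting $\deg\mathscr{E}^{1,2}=-\ell_0-\deg\mathscr{E}^{2,1}$ and $|\Delta|=k+m+\ell_0$, and eliminating $k+m$ through the Riemann--Hurwitz identity $k+m+\ell-r=d+2$, all of $k,m,r,\ell$ cancel and only $d-\ell_0-\sum_i(\lfloor 3y_i/5\rfloor-\lfloor 2y_i/5\rfloor)$ survives, giving the stated $\deg\mathscr{E}^{2,1}$. The same bookkeeping for $\theta_0$ (governed by $\mu_2-\mu_1$, with points over $1$ now contributing $e$ each, hence $d$ in total) collapses to $\deg\mathscr{E}^{2,1}-\deg\mathscr{E}^{3,0}=\sum_i(\lfloor 2y_i/5\rfloor-\lfloor y_i/5\rfloor)$, and combining the two produces the formula for $\deg\mathscr{E}^{3,0}$.

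The difficulty here is bookkeeping rather than conceptual. The crucial point is the correct trichotomy for each fibre of $g$: in particular, a point over $\infty$ with $5\mid y_i$ has trivial monodromy and so becomes an \emph{apparent} singularity, which is exactly what forces the $-1$ corrections and converts the naive count $\ell$ into $\ell_0$. The remaining risk is purely arithmetic, namely keeping the floor functions, the count $|\Delta|=k+m+\ell_0$, and the Riemann--Hurwitz substitution perfectly aligned so that the auxiliary quantities $k$, $m$, $\ell$, and $r$ all cancel cleanly.
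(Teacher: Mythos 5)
Your proposal is correct and follows essentially the same route as the paper: read off the exponents of $g^*L$ at points over $0$, $1$, $\infty$ and at ramification points, apply Theorem~\ref{thm:ekmz} to compute the cokernel lengths of $\theta_0$ and $\theta_1$ (with the apparent-singularity correction $-(\ell-\ell_0)$ at points over $\infty$ with $5 \mid y_i$), derive $\deg\mathscr{E}^{3-i,i}+\deg\mathscr{E}^{i,3-i}=-\ell_0$ from equation~\eqref{eq:pardeg} and Lemma~\ref{lemma:conj}, and eliminate $k$, $m$, $r$ via Riemann--Hurwitz. The only difference is presentational: you spell out the fractional-part cancellation $\{2y_i/5\}+\{3y_i/5\}=1$ behind the $-\ell_0$ relation, which the paper states without proof, and you substitute the parabolic relation directly into the cokernel equation rather than first isolating $\deg\mathscr{E}^{2,1}-\deg\mathscr{E}^{1,2}=d-\sum_i\big(\lfloor\tfrac{3y_i}{5}\rfloor-\lfloor\tfrac{2y_i}{5}\rfloor\big)$; these are algebraically identical.
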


\begin{Remark}
From this data, one can compute all Hodge numbers of the parabolic cohomology groups, by applying the results of Section~\ref{sec:impres}.
\end{Remark}
\begin{proof}
By Theorem \ref{thm:ekmz}, the map $\theta_1\colon \mathscr{E}^{2,1} \rightarrow \mathscr{E}^{1,2} \otimes \Omega_{\mathbb{P}^1}(\Delta)$ has cokernel of length
\begin{gather*}r + \sum_{i=1}^\ell \big( \lfloor\tfrac{3y_i}{5} \rfloor - \lfloor \tfrac{2y_i}{5} \rfloor \big) - (\ell - \ell_0)\end{gather*}
(note that $(\ell - \ell_0) $ counts the number of values of $i$ so that $5 \mid y_i$). Thus
\begin{gather*}
\deg \mathscr{E}^{2,1} - \deg \mathscr{E}^{1,2} = k + \ell + m -2 -r - \sum_{i=1}^\ell \big( \lfloor\tfrac{3y_i}{5} \rfloor - \lfloor \tfrac{2y_i}{5} \rfloor \big).
\end{gather*}
Applying the Riemann--Hurwitz formula, we see that
\begin{gather}\label{eq:RH}
2d - 2 = \sum_{p}(e_p -1) = r + (d-k)+ (d-\ell) + (d-m),
\end{gather}
which implies that $d = k+ \ell + m - r-2$. Therefore,
\begin{gather*}\deg \mathscr{E}^{2,1} - \deg \mathscr{E}^{1,2} = d- \sum_{i=1}^\ell \big( \lfloor\tfrac{3y_i}{5} \rfloor - \lfloor \tfrac{2y_i}{5} \rfloor \big).\end{gather*}

Equation~\eqref{eq:pardeg} and Lemma \ref{lemma:conj} also give that $\deg \mathscr{E}^{3-i,i} + \deg \mathscr{E}^{i,3-i}= -\ell_0$. So we conclude that
\begin{gather*}\deg \mathscr{E}^{2,1} = \frac{1}{2}\bigg( d - \ell_0 - \sum_{i=1}^\ell \big( \lfloor\tfrac{3y_i}{5} \rfloor - \lfloor \tfrac{2y_i}{5} \rfloor \big)\bigg).\end{gather*}

We also have that the cokernel of the map $\theta_0\colon \mathscr{E}^{3,0} \rightarrow \mathscr{E}^{2,1}\otimes \Omega_{\mathbb{P}^1}(\Delta)$ has length
\begin{gather*}r + d + \sum_{i=1}^\ell \big( \lfloor\tfrac{2y_i}{5} \rfloor - \lfloor \tfrac{y_i}{5} \rfloor \big) - (\ell - \ell_0),\end{gather*}
thus
\begin{gather*}
 \deg \mathscr{E}^{3,0} - \deg \mathscr{E}^{2,1} = k+ \ell + m - 2 - r - d - \sum_{i=1}^\ell \big( \lfloor\tfrac{2y_i}{5} \rfloor - \lfloor \tfrac{y_i}{5} \rfloor \big) =- \sum_{i=1}^\ell \big( \lfloor\tfrac{2y_i}{5} \rfloor - \lfloor \tfrac{y_i}{5} \rfloor \big),
\end{gather*}
where the second line follows by applying equation \eqref{eq:RH}. Adding the expression for $\deg \mathscr{E}^{2,1}$ obtained above, we get
\begin{gather*}\deg \mathscr{E}^{3,0} = \frac{1}{2}\bigg( d - \ell_0 - \sum_{i=1}^\ell \big( \lfloor\tfrac{3y_i}{5} \rfloor - \lfloor \tfrac{2y_i}{5} \rfloor \big)\bigg) - \sum_{i=1}^\ell \big( \lfloor\tfrac{2y_i}{5} \rfloor - \lfloor \tfrac{y_i}{5} \rfloor \big),\end{gather*}
which completes the proof of the theorem.
\end{proof}

Since we have that $\deg \mathscr{E}^{3,0} + \deg\mathscr{E}^{0,3} = - \ell_0$, it follows that
\begin{gather*}\deg \mathscr{E}^{0,3} = -\frac{1}{2}\bigg(d + \ell_0 - \sum_{i=1}^\ell \big( \lfloor\tfrac{3y_i}{5} \rfloor - \lfloor \tfrac{2y_i}{5} \rfloor \big)\bigg) + \sum_{i=1}^\ell \big( \lfloor\tfrac{2y_i}{5} \rfloor - \lfloor \tfrac{y_i}{5} \rfloor \big).\end{gather*}
Therefore,

\begin{Corollary}
The Hodge structure on $g^*L$ is Calabi--Yau if and only if
\begin{gather*}\frac{1}{2}\bigg(d + \ell_0 - \sum_{i=1}^\ell \big( \lfloor\tfrac{3y_i}{5} \rfloor - \lfloor \tfrac{2y_i}{5} \rfloor \big)\bigg) - \sum_{i=1}^\ell \big( \lfloor\tfrac{2y_i}{5} \rfloor - \lfloor \tfrac{y_i}{5} \rfloor \big) = 2.\end{gather*}
\end{Corollary}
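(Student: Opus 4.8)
The plan is to reduce the Calabi--Yau condition to a single statement about $\deg \mathscr{E}^{0,3}$ and then substitute the formula derived immediately above the statement. First I would unwind the definition. The parabolic cohomology $H^1(\mathbb{P}^1, j_* g^*\mathbb{V})$ carries a weight~$4$ Hodge structure, since $g^*\mathbb{V}$ underlies a weight~$3$ variation of Hodge structure of $(1,1,1,1)$-type. By the definition of a Calabi--Yau Hodge structure given above, this Hodge structure is Calabi--Yau precisely when $h^{4,0} = 1$; since a polarizable Hodge structure satisfies $h^{p,q} = h^{q,p}$, this is equivalent to $h^{0,4} = 1$.

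Next I would apply Theorem~\ref{theorem:msh} with $n = 3$, which expresses this Hodge number as the dimension of a space of global sections of a line bundle on $\mathbb{P}^1$:
\[
h^{0,4} = h^0\big(\mathbb{P}^1, \mathscr{O}_{\mathbb{P}^1}\big({-}2 - \deg \mathscr{E}^{0,3}\big)\big).
\]
Since $h^0(\mathbb{P}^1, \mathscr{O}_{\mathbb{P}^1}(N)) = \max(0, N+1)$, this dimension equals $1$ if and only if $N = 0$. Hence the Calabi--Yau condition is equivalent to $-2 - \deg \mathscr{E}^{0,3} = 0$, that is, to $\deg \mathscr{E}^{0,3} = -2$.

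Finally I would substitute the expression for $\deg \mathscr{E}^{0,3}$ obtained just before the statement from the relation $\deg \mathscr{E}^{3,0} + \deg \mathscr{E}^{0,3} = -\ell_0$ together with the Proposition. Setting that formula equal to $-2$ and clearing the sign yields exactly the displayed equation, which completes the proof.

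There is no serious obstacle here once the preceding Proposition is in hand; the whole argument is a short chain of reductions. The only points requiring care are bookkeeping of conventions: recognizing that the relevant weight of the parabolic cohomology is $4$ rather than the weight $3$ of the underlying variation, converting the Calabi--Yau condition $h^{4,0}=1$ via Hodge symmetry into the quantity $h^{0,4}$ that Theorem~\ref{theorem:msh} actually computes, and observing that the dimension of global sections of a line bundle on $\mathbb{P}^1$ equals $1$ exactly at degree $0$, so that the resulting criterion is a strict equality and not merely an inequality.
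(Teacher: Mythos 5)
Your proposal is correct and follows exactly the paper's (implicit) argument: the paper deduces this corollary directly from the preceding formula for $\deg \mathscr{E}^{0,3}$, using the definition of a Calabi--Yau Hodge structure on the weight-$4$ parabolic cohomology, Hodge symmetry, Theorem~\ref{theorem:msh} with $n=3$, and the fact that $h^0(\mathbb{P}^1,\mathscr{O}_{\mathbb{P}^1}(N))=1$ precisely when $N=0$. Your care with the weight bookkeeping and the strict-equality point matches the intended reasoning.
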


\begin{Remark}
Note that the value of $h^{0,3}$ of the parabolic cohomology only depends on the structure of ramif\/ication over the point~$\infty$, just as in the case of quartic threefolds.
\end{Remark}

A consequence of this corollary is that there is a f\/inite and enumerable set of values for $[y_1,\dots, y_\ell]$ so that $g^*L$ is Calabi--Yau. This set strictly contains the possible ramif\/ication data over $\infty$ under which the total space of the pullback of the family of mirror quintics is Calabi--Yau. The list of all values of $[y_1,\dots , y_\ell]$ are as follows:
 \begin{gather*} [1, 1], [1,2], [1,3],[1,4],[1,5], [2, 2], [2,3],[2,4],[2,5], \\
 [3,3], [3,4], [3,5], [4,4], [4,5], [5,5], [6], [7], [8], [9], [10]. \end{gather*}
We expect that a proper subset of these ramif\/ication indices should correspond to maps along which we can pull back the family of mirror quintics and get a~variety that admits a smooth (or terminal) Calabi--Yau resolution.

One can also check that the only cases in which the Hodge number $h^{3,0}$ of the parabolic cohomology is 0 are the cases where $\ell = 1$ and $y_1 \in \{1,2,3,4,5\}$.

\begin{Remark}
 Similar calculations can presumably be completed for any family of Calabi--Yau threefolds realising one of the fourteen hypergeometric variations of Hodge structure of type $(1,1,1,1)$ classif\/ied by Doran and Morgan~\cite{doranmorgan}. This includes the so-called ``mirror twin'' families~\cite[Section 3]{doranmorgan}, which have the same Picard--Fuchs equations and $\mathbb{R}$-VHS's as some well-known toric examples, but have dif\/ferent underlying $\mathbb{Z}$-VHS's and Hodge number $h^{1,1}$. In particular, the calculations above apply equally to the ``quintic mirror twin'' family, which shares its Picard--Fuchs equation and $\mathbb{R}$-VHS with the quintic mirror example above.
\end{Remark}

\subsection*{Acknowledgements}

C.F.~Doran (University of Alberta) was supported by the Natural Sciences and Engineering Research Council of Canada, the Pacif\/ic Institute for the Mathematical Sciences, and the Visiting Campobassi Professorship at the University of Maryland.

A.~Harder (University of Miami) was partially supported by the Simons Collaboration Grant in \emph{Homological Mirror Symmetry}.

A.~Thompson (University of Warwick/University of Cambridge) was supported by the Engineering and Physical Sciences Research Council programme grant \emph{Classification, Computation, and Construction: New Methods in Geometry}.

\pdfbookmark[1]{References}{ref}
\LastPageEnding

\end{document}